\newtheorem{theorem}{Theorem}
\newtheorem{lemma}[theorem]{Lemma}
\newtheorem{remark}{Remark}
\title[Solution form of a higher order system of difference equation $\ldots$]{Solution Form of a higher order system of difference equation and dynamical behavior of its special case}
\author[N. Haddad, N. Touafek, J. F. T. Rabago]{Nabila Haddad, Nouressadat Touafek and Julius Fergy Rabago}
\address{Nabila Haddad, LMAM Laboratory, Mathematics Department, Jijel University,
Jijel 18000, Algeria} \email{nabilahaddadt@yahoo.com}
\address{Nouressadat Touafek, LMAM Laboratory, Mathematics Department, Jijel University,
Jijel 18000, Algeria} \email{touafek@univ-jijel.dz}
\address{Julius Fergy T. Rabago, Department of Mathematics and Computer Science, College of Science, University of the Philippines Baguio,
Baguio City 2600, Philippines} \email{jfrabago@gmail.com}
\subjclass[2000]{Primary 39A10, Secondary 40A05.}
\keywords{Difference equations, system of difference equation,
closed form solution, periodicity.}
\date{\today}
\begin{document}
\maketitle
%---------------------------------- ABSTRACT -----------------------------------------
\begin{abstract}
The solution form of the system of nonlinear difference equations
\begin{equation*}
x_{n+1} = \frac{x_{n-k+1}^{p}y_{n}}{a y_{n-k}^{p}+b y_{n}},\
y_{n+1} = \frac{y_{n-k+1}^{p}x_{n}}{\alpha x_{n-k}^{p}+\beta x_{n}},
\quad n, p \in \mathbb{N}_{0},\ k\in \mathbb{N},
\end{equation*}
where the coefficients
$a, b, \alpha, \beta$ and the initial values $x_{-i},y_{-i},i\in\{0,1,\ldots,k\}$ are real numbers, are obtained through elementary methods.
In addition, the existence of periodic solution of the above equation for the case $p=1$ are determined.
Numerical examples are presented to illustrate the results exhibited in the paper.
\end{abstract}
%---------------------------------- INTRODUCTION -----------------------------------------

\section{Introduction}

Lately, nonlinear difference equations have attracted the attentions of many mathematicians. 
In fact, we have witnessed a rapid growth of interest in these types of equations in the previous decade.
Perhaps, the motivations rooted from the fact that these type of equations have numerous applications not only in the fields of mathematics but also in related sciences, 
especially in discrete system biology, epidemiology, ecology, economics, physics and so on. 
We believe that this line of research will continue to attract the attentions of more researchers in future years as more fascinating and intriguing results are obtained and delivered in recent investigations.
Quite recently, the problem of finding the closed-form solutions of nonlinear difference equations have become a trend over this research topic. 
As a matter of fact, numerous papers dealt with the problem of solving nonlinear difference equations in any way possible, see, for instance \cite{berg}--\cite{yaz1}. 
Apparently, finding the solution form of these type of equations is, in general, a very challenging task. 
Nevertheless, various method were offered recently to reduce complicated nonlinear difference equations into linear forms which have already known solution forms. 
For instance, through transforming  into linear types, a large classes of nonlinear difference equations were resolved in closed-forms (see, e.g., \cite{stev1}--\cite{stev18}).

In an earlier paper, Touafek and Elsayed established in \cite{toua1} the solution form of the system of nonlinear difference equations
\begin{equation}
\label{O}
x_{n+1}=\frac{x_{n-1}y_{n}}{y_{n-2}\pm y_{n}},
\quad 
y_{n+1}=\frac{y_{n-1}x_{n}}{x_{n-2}\pm x_{n}},
\qquad n \in\mathbb{N}_{0},
\end{equation}
with real nonzero initial values $x_{-2}$, $x_{-1}$, $x_0$, $y_{-2}$, $y_{-1}$ and $y_0$.

In this existing work, we shall generalize the results exhibited in \cite{toua1} through examining the solution form of the following system of difference equations
\begin{equation} 
\label{T}
x_{n+1} = \frac{x_{n-k+1}^{p}y_{n}}{a y_{n-k}^{p}+b y_{n}},
\quad 
y_{n+1} = \frac{y_{n-k+1}^{p}x_{n}}{\alpha x_{n-k}^{p}+\beta x_{n}},
\qquad n,\ p  \in \mathbb{N}_{0},\ k\in \mathbb{N}.
\end{equation}
Here, the parameters $a$, $b$, $\alpha$, $\beta$ and initial values
$\{x_i\}_{i=-k}^0$, $\{y_i\}_{i=-k}^0$ are all real numbers.
In the case when $p=1$, we give the necessary and sufficient conditions for equation \eqref{T} to have a periodic solution of period $k$ (not necessarily a prime period).
We remark that, by a well defined solution of system \eqref{T}, we mean a solution such that $\left(a y_{n-k}^{p}+b y_{n}\right)\left(\alpha x_{n-k}^{p}+\beta x_{n}\right)\neq0$, for all values of $n \in\mathbb{N}_{0}$.

In deriving the form of solution of the two equations, the following lemma shall be useful.
\begin{lemma}[cf. \cite{sab}] \label{lem1} 
Consider the linear difference equation
\begin{equation*}
y_{n+2}=ay_{n}+b,\qquad n\in \mathbb{N}_{0}. \label{1.3}
\end{equation*}
Then, 
\[
\forall n \in \mathbb{N}_0 : y_{2n + i}= \left\{
\begin{array}{ll}
y_i+b n, & a= 1,\\[1em]
a^{n}y_i+\left(\displaystyle\frac{a^{n}-1}{a-1}\right)b, & \text{otherwise},
\end{array}
\right.\text{for}\ i=0,1.
\]
\end{lemma}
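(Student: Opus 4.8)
The plan is to exploit the fact that the second-order recurrence $y_{n+2} = a y_n + b$ does not couple consecutive terms at all: it only links terms whose indices differ by $2$. Hence, fixing a parity class $i \in \{0,1\}$ and setting $z_n := y_{2n+i}$, one immediately obtains $z_{n+1} = y_{2(n+1)+i} = y_{(2n+i)+2} = a y_{2n+i} + b = a z_n + b$, so that $\{z_n\}_{n\ge 0}$ satisfies a \emph{first-order} linear difference equation with constant coefficient and constant forcing term, subject to the initial condition $z_0 = y_i$.

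Next I would solve this first-order equation explicitly and separately in the two regimes. When $a = 1$ the recurrence is $z_{n+1} = z_n + b$, an arithmetic progression, so $z_n = z_0 + bn$ by a one-line induction on $n$. When $a \neq 1$, I would either iterate directly, $z_n = a^n z_0 + b\,(1 + a + \cdots + a^{n-1}) = a^n z_0 + b\,\dfrac{a^n - 1}{a - 1}$, invoking the finite geometric sum, or equivalently pass to the shifted variable $w_n := z_n - \dfrac{b}{1-a}$ (the unique fixed point of $z \mapsto az+b$), which obeys the homogeneous recursion $w_{n+1} = a w_n$, hence $w_n = a^n w_0$, and then unwind. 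Either route closes by a routine induction on $n$; the only mildly fiddly point is recording the particular part as $\dfrac{a^n-1}{a-1}b$ rather than the manifestly equal $\dfrac{1-a^n}{1-a}b$.

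Finally I would substitute back $z_n = y_{2n+i}$ and $z_0 = y_i$ to recover precisely the two displayed formulas, and note that since $i \in \{0,1\}$ is arbitrary, every index $m \in \mathbb{N}_0$ is accounted for via $m = 2n$ or $m = 2n+1$.

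I do not expect a genuine obstacle here: the whole content is the observation that the even- and odd-indexed subsequences decouple into elementary first-order recursions with closed-form solutions. The only points demanding care are the case split at $a=1$, where the geometric-sum expression degenerates, and checking consistency of the base case $n=0$ with the stated formulas, both of which correctly collapse to $y_i$.
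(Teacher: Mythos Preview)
Your argument is correct and complete: decoupling into parity classes reduces the problem to the elementary first-order recurrence $z_{n+1}=az_n+b$, whose solution you derive cleanly in both cases. The paper itself does not supply a proof of this lemma at all---it is stated with a citation to Elaydi's textbook~\cite{sab}---so there is nothing to compare against; your write-up is exactly the standard justification one would expect.
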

In the sequel, as usual, we assume that $\prod_{j=i}^{k}A_{j}=1$ and $\sum_{j=i}^{k}A_{j}=0$, for all $k<i$.

Now, we turn on the organization of the paper.
In the next section (Section 2), we shall derive analytically, reducing the system to linear types and then utilizing Lemma \ref{lem1}, the form of solutions of system \eqref{T}.
In Section 3, we examine the dynamics of the system for the case $p=1$.
Particularly, we examine the boundedness, the asymptotic behavior and periodicity of solutions of the system \eqref{T} with $p=1$, 
Finally, we end our paper by providing some examples to illustrate numerically our results in Section 3. 

%---------------------------------- MAIN RESULTS -----------------------------------------
\section{Form of Solutions of system \eqref{T}}
In this section, we determine the solution form of the system of difference equations \eqref{T}.
Throughout the discussion, we assume, without further mentioning, that the solutions of system \eqref{T} being studied are well-defined.

Now, to begin with, we rearrange system \eqref{T} as follows
\[
\frac{x_{n-k+1}^{p}}{x_{n+1}}=a \frac{y_{n-k}^{p}}{y_{n}}+b,\qquad
\frac{y_{n-k+1}^{p}}{y_{n+1}}=\alpha\frac{x_{n-k}^{p}}{x_{n}}+\beta.
\]
Putting
\begin{equation}\label{e2}
	u_{n}=\frac{x_{n-k}^{p}}{x_{n}},
	\qquad v_{n}=\frac{y_{n-k}^{p}}{y_{n}},
	\qquad \forall n\in\mathbb{N}_{0},
\end{equation}
we get
\[
	u_{n+1}=av_{n}+b,
	\qquad v_{n+1}=\alpha u_{n}+\beta,
	\qquad \forall n\in\mathbb{N}_{0}.
\]
So
\[
u_{n+2}=a\alpha u_{n}+a\beta+b,
\qquad v_{n+2}=a\alpha v_{n}+\alpha b+\beta,
\qquad \forall n\in \mathbb{N}_{0}.
\]
From this, we get, for all $n\in\mathbb{N}_0$, the following linear first order nonhomogeneous difference equations,
\begin{align*}
	u_{2(n+1)} &= a\alpha u_{2n}+a\beta+b,		\qquad u_{2(n+1)+1}=a\alpha u_{2n+1}+a\beta+b,\\
	v_{2(n+1)} &=a\alpha v_{2n}+\alpha b+\beta,	\qquad v_{2(n+1)+1}=a\alpha v_{2n+1}+\alpha b+\beta.
\end{align*}
Then, in view of Lemma \ref{lem1}, we get
\begin{equation}\label{f1}
\forall n\in\mathbb{N}_0 : \ u_{2n + i} 
	= \left\{
	\begin{array}{ll}
		u_{i}+\left(a\beta+b\right) n, & a\alpha= 1,\\[1em]
		\left(a\alpha\right)^{n}u_{i}+\left(\displaystyle\frac{\left(a\alpha\right)^{n}-1}{\left(a\alpha\right)-1}\right)\left(a\beta+b\right), & a\alpha \neq 1.
	\end{array}
	\right.,\ i = 0,1
\end{equation}
and
\begin{equation}\label{f32}
\forall n\in\mathbb{N}_0 : \  v_{2n + i} 
	= \left\{
	\begin{array}{ll}
		v_{i}+\left(\alpha b+\beta\right) n, & a\alpha= 1,\\[1em]
		\left(a\alpha\right)^{n}v_{i}+\left(\displaystyle\frac{\left(a\alpha\right)^{n}-1}{\left(a\alpha\right)-1}\right)\left(a\beta+b\right), & a\alpha \neq 1.
	\end{array}
	\right.,\ i = 0,1.
\end{equation}
From equations \eqref{e2}-\eqref{f32}, it follows that for all $n\in\mathbb{N}_0$, we have
\begin{align}
u_{2n}&
	= \left\{
		\begin{array}{ll}
		\displaystyle\frac{x_{-k}^{p}+\left(a\beta+b\right)n x_{0}}{x_{0}}, & a\alpha= 1,\\[1.5em]
		\displaystyle\frac{\left(a\alpha\right)^{n}x_{-k}^{p}+\frac{\left(a\alpha\right)^{n}-1}{a\alpha-1}\left(a\beta+b\right)x_{0}}{x_{0}}, &a\alpha \neq 1.\\
	\end{array}
	\right.,\label{e6}
\\
u_{2n+1}&
	= \left\{
		\begin{array}{ll}
		\displaystyle\frac{ay_{-k}^{p}+by_{0}+\left(a\beta+b\right)n y_{0}}{y_{0}}, & a\alpha= 1,\\[1.5em]
		\displaystyle\frac{\left(a\alpha\right)^{n}(ay_{-k}^{p}+by_{0})+\frac{\left(a\alpha\right)^{n}-1}{a\alpha-1}\left(a\beta+b\right)y_{0}}{y_{0}}, & a\alpha \neq 1.
		\end{array}
	\right.\label{e7}
\end{align}
and
\begin{align}
v_{2n}&
	= \left\{
	\begin{array}{ll}
		\displaystyle\frac{y_{-k}^{p}+\left(\alpha b+\beta\right)n y_{0}}{y_{0}}, & a\alpha= 1,\\[1.5em]
		\displaystyle\frac{\left(a\alpha\right)^{n}y_{-k}^{p}+\frac{\left(a\alpha\right)^{n}-1}{a\alpha-1}\left(\alpha b+\beta\right)y_{0}}{y_{0}}, & a\alpha \neq 1.\\
	\end{array}
	\right.,\label{e8}
\\
v_{2n+1}&
	= \left\{
		\begin{array}{ll}
		\displaystyle\frac{\alpha x_{-k}^{p}+\beta x_{0}+\left(\alpha b+\beta\right)n x_{0}}{x_{0}}, & a\alpha= 1,\\[1.5em]
		\displaystyle\frac{\left(a\alpha\right)^{n}(\alpha x_{-k}^{p}+\beta x_{0}) +\frac{\left(a\alpha\right)^{n}-1}{a\alpha-1}\left(\alpha b+\beta\right)x_{0}}{x_{0}}, & a\alpha \neq 1.
		\end{array}
	\right.\label{e9}
\end{align}
Now, by rearranging equation \eqref{e2}, we have
\begin{equation*}
	x_{n}=\frac{x_{n-k}^{p}}{u_{n}},\qquad y_{n}=\frac{y_{n-k}^{p}}{v_{n}},\qquad \forall n\in\mathbb{N}_{0}.
\end{equation*}
Replacing $n$ by $kn+r$, for $r=0,1,\ldots,k-1 = \overline{0,k-1}$, we get
\begin{equation*}
	x_{kn+r}=\frac{x_{k(n-1)+r}^{p}}{u_{kn+r}}
	\quad \text{and} \quad
	y_{kn+r}=\frac{y_{k(n-1)+r}^{p}}{v_{kn+r}},\quad
	\text{respectively}.
\end{equation*}
Iterating the right-hand-side (RHS) of the above equations, we get
\begin{align}
	x_{kn+r}=\frac{x_{r-k}^{p^{n+1}}}{\prod_{i=0}^{n}u_{ki+r}^{p^{(n-i)}}},\qquad
	y_{kn+r}=\frac{y_{r-k}^{p^{n+1}}}{\prod_{i=0}^{n}v_{ki+r}^{p^{(n-i)}}},\quad \forall r = \overline{0,k-1}, \ \ n\in\mathbb{N}_0.\label{e10}
\end{align}
We consider two cases: (i) $k$ is even; (ii) $k$ is odd.\\

%--------------------------------------------------------------------------- CASE 1
\underline{CASE 1: $k$ is even}. Suppose $k=2l$, ($l=1,2,\ldots$). 
	Then, from \eqref{e10} and depending on the parity of $r$, we have 
\[
\forall r = \overline{0,l-1}, \ n\in\mathbb{N}_0:\ \left\{ 
	\begin{array}{c}
		x_{2(ln+r)+j}=\displaystyle\frac{x_{2(r-l)+j}^{p^{n+1}}}{\prod_{i=0}^{n}u_{2(li+r)+j}^{p^{(n-i)}}},\\[2em]
		y_{2(ln+r)+j} =\displaystyle\frac{y_{2(r-l)+j}^{p^{n}}}{\prod_{i=0}^{n}v_{2(li+r)+j}^{p^{(n-i)}}}.
	\end{array}\right. j =0 ,1.
\]

%--------------------------------------------------------------------------- CASE 2
\underline{CASE 2: $k$ is odd}. Now, suppose $k=2l + 1$, ($l=0,1,\ldots$).  
	Then, again from \eqref{e10} and depending on the parity of $n$, we get 
	\begin{equation}
\forall r = \overline{0,2l}, \ n\in\mathbb{N}_0: \left\{ 
	\begin{array}{l}
	x_{(2l+1)(2n) + r}=\displaystyle\frac{x_{r-2l-1}^{p^{2n+1}}}{\left(\prod_{i=0}^{n}u_{(2l+1)(2i)+r}^{p^{2(n-i)}}\right)
						\left(\prod_{i=0}^{n-1}u_{(2l+1)(2i+1)+r}^{p^{2(n-i)-1}}\right)}
	\\[2em]
	x_{(2l+1)(2n+1)+r}=\displaystyle\frac{x_{r-2l-1}^{p^{2n+2}}}{\prod_{i=0}^{n}u_{(2l+1)(2i)+r}^{p^{2(n-i)+1}}u_{(2l+1)(2i+1)+r}^{p^{2(n-i)}}}
	\label{e11}
	\end{array}\right.
\end{equation}
and
\begin{equation}
\forall r = \overline{0,2l}, \ n\in\mathbb{N}_0: \left\{ 
	\begin{array}{l}
	y_{(2l+1)(2n) + r} = \displaystyle\frac{y_{r-2l-1}^{p^{2n+1}}}{\left(\prod_{i=0}^{n}v_{(2l+1)(2i)+r}^{p^{2(n-i)}}\right)
						\left(\prod_{i=0}^{n-1}v_{(2l+1)(2i+1)+r}^{p^{2(n-i)-1}}\right)}
	\\[2em]
	y_{(2l+1)(2n+1)+r} = \displaystyle\frac{y_{r-2l-1}^{p^{2n+2}}}{\prod_{i=0}^{n}v_{(2l+1)(2i)+r}^{p^{2(n-i)+1}}v_{(2l+1)(2i+1)+r}^{p^{2(n-i)}}}
	\label{e14}
	\end{array}\right..
\end{equation}

%--------------------------------------------------------------------------- SUBCASE 1
Here we consider two sub-cases:\\

\underline{Subcase 2.1: $l \neq 0$}. 
From \eqref{e11}-\eqref{e14} and depending on the parity of $n$, we get, for all $n\in\mathbb{N}_0$, the following expressions
\begin{align*}
x_{2((2l+1)n+r)}&=\frac{x_{2(r-l)-1}^{p^{2n+1}}}
				{\left(\prod_{i=0}^{n}u_{2((2l+1)i+r)}^{p^{2(n-i)}}\right) \left(\prod_{i=0}^{n-1}u_{2((2l+1)i+l+r)+1}^{p^{2(n-i)-1}}\right)}
,\quad r=\overline{0,l},
\\
x_{2((2l+1)n+r)+1}&=\frac{x_{r-2l-1}^{p^{2n+1}}}
	{\left(\prod_{i=0}^{n}u_{2((2l+1)i+r)+1}^{p^{2(n-i)}}\right) \left(\prod_{i=0}^{n-1}u_{2((2l+1)i+l+r+1)}^{p^{2(n-i)-1}}\right)}
,\quad r=\overline{0,l-1},
\\
x_{2((2l+1)n+l+r)+1}&=\frac{x_{2(r-l)-1}^{p^{2n+2}}}
	{\prod_{i=0}^{n}u_{2((2l+1)i+r)}^{p^{2(n-i)+1}}u_{2((2l+1)i+l+r)+1}^{p^{2(n-i)}}}
,\quad r=\overline{0,l},
\end{align*}
\begin{align*}
x_{2((2l+1)n+l+r+1)}&=\frac{x_{2(r-l)}^{p^{2n+2}}}
	{\prod_{i=0}^{n}u_{2((2l+1)i+r)+1}^{p^{2(n-i)+1}}u_{2((2l+1)i+l+r+1)}^{p^{2(n-i)}}}
,\quad r=\overline{0,l-1}
\end{align*}
and
\begin{align*}
y_{2((2l+1)n+r)}&=\frac{y_{2(r-l)-1}^{p^{2n+1}}}
	{\left(\prod_{i=0}^{n}v_{2((2l+1)i+r)}^{p^{2(n-i)}}\right) \left(\prod_{i=0}^{n-1}v_{2((2l+1)i+l+r)+1}^{p^{2(n-i)-1}}\right)}
,\quad r=\overline{0,l},
\\
y_{2((2l+1)n+r)+1}&=\frac{y_{r-2l-1}^{p^{2n+1}}}
	{\left(\prod_{i=0}^{n}v_{2((2l+1)i+r)+1}^{p^{2(n-i)}}\right) \left(\prod_{i=0}^{n-1}v_{2((2l+1)i+l+r+1)}^{p^{2(n-i)-1}}\right)}
,\quad r=\overline{0,l-1},
\\
y_{2((2l+1)n+l+r)+1}&=\frac{y_{2(r-l)-1}^{p^{2n+2}}}{\prod_{i=0}^{n}v_{2((2l+1)i+r)}^{p^{2(n-i)+1}}v_{2((2l+1)i+l+r)+1}^{p^{2(n-i)}}}
,\quad r=\overline{0,l},
\\
y_{2((2l+1)n+l+r+1)}&=\frac{y_{2(r-l)}^{p^{2n+2}}}{\prod_{i=0}^{n}v_{2((2l+1)i+r)+1}^{p^{2(n-i)+1}}v_{2((2l+1)i+l+r+1)}^{p^{2(n-i)}}}
,\quad r=\overline{0,l-1}.
\end{align*}

\underline{Subcase 2.2: $l = 0$}. 
Using the fact that in this case $r=0$, we get from \eqref{e11}-\eqref{e14}, for all $n \in \mathbb{N}_0$,
\[
	x_{2n} =\frac{x_{-1}^{p^{2n+1}}}{\left(\prod_{i=0}^{n}u_{2i}^{p^{2(n-i)}}\right) \left(\prod_{i=0}^{n-1}u_{2i+1}^{p^{2(n-i)-1}}\right)},
			\qquad x_{2n+1}=\frac{x_{-1}^{p^{2n+2}}}{\prod_{i=0}^{n}u_{2i}^{p^{2(n-i)+1}}u_{2i+1}^{p^{2(n-i)}}}
\]
and
\[
	y_{2n} =\frac{y_{-1}^{p^{2n+1}}}{\left(\prod_{i=0}^{n}v_{2i}^{p^{2(n-i)}}\right) \left(\prod_{i=0}^{n-1}v_{2i+1}^{p^{2(n-i)-1}}\right)},
			\qquad y_{2n+1}=\frac{y_{-1}^{p^{2n+2}}}{\prod_{i=0}^{n}v_{2i}^{p^{2(n-i)+1}}v_{2i+1}^{p^{2(n-i)}}}.
\]
The following theorem summarizes our previous discussion.
%--------------------------------------------------------------- THEOREM: SOLUTION FORM
\begin{theorem}\label{th2} 
Let $\left\{(x_{n},y_{n})\right\}_{n\geq -k}$ be a solution of System \eqref{T}.
Then, we have the following:
\begin{enumerate}
%--------------------------------------------------------------- THEOREM: SOLUTION FORM a
    \item[{\rm (a)}] If $k=2l$ ($l=1,2,\ldots$), then for all $n \in \mathbb{N}_0$, we have
  	\[
	\forall r = \overline{0,l-1}, \ n\in\mathbb{N}_0:\ \left\{ 
	\begin{array}{c}
		x_{2(ln+r)+j}=\displaystyle\frac{x_{2(r-l)+j}^{p^{n+1}}}{\prod_{i=0}^{n}u_{2(li+r)+j}^{p^{(n-i)}}},\\[2em]
		y_{2(ln+r)+j} =\displaystyle\frac{y_{2(r-l)+j}^{p^{n}}}{\prod_{i=0}^{n}v_{2(li+r)+j}^{p^{(n-i)}}}.
	\end{array}\right. j =0 ,1.
	\]
%--------------------------------------------------------------- THEOREM: SOLUTION FORM b  
  \item[{\rm (b)}] If $k=2l + 1$ ($l=1,2,\ldots$), then for all $n \in \mathbb{N}_0$, we have
	\begin{align*}
x_{2((2l+1)n+r)}&=\frac{x_{2(r-l)-1}^{p^{2n+1}}}
				{\left(\prod_{i=0}^{n}u_{2((2l+1)i+r)}^{p^{2(n-i)}}\right) \left(\prod_{i=0}^{n-1}u_{2((2l+1)i+l+r)+1}^{p^{2(n-i)-1}}\right)}
,\quad r=\overline{0,l},
\\
x_{2((2l+1)n+r)+1}&=\frac{x_{r-2l-1}^{p^{2n+1}}}
	{\left(\prod_{i=0}^{n}u_{2((2l+1)i+r)+1}^{p^{2(n-i)}}\right) \left(\prod_{i=0}^{n-1}u_{2((2l+1)i+l+r+1)}^{p^{2(n-i)-1}}\right)}
,\quad r=\overline{0,l-1},
\\
x_{2((2l+1)n+l+r)+1}&=\frac{x_{2(r-l)-1}^{p^{2n+2}}}
	{\prod_{i=0}^{n}u_{2((2l+1)i+r)}^{p^{2(n-i)+1}}u_{2((2l+1)i+l+r)+1}^{p^{2(n-i)}}}
,\quad r=\overline{0,l},
\\
x_{2((2l+1)n+l+r+1)}&=\frac{x_{2(r-l)}^{p^{2n+2}}}
	{\prod_{i=0}^{n}u_{2((2l+1)i+r)+1}^{p^{2(n-i)+1}}u_{2((2l+1)i+l+r+1)}^{p^{2(n-i)}}}
,\quad r=\overline{0,l-1};
\\
y_{2((2l+1)n+r)}&=\frac{y_{2(r-l)-1}^{p^{2n+1}}}
	{\left(\prod_{i=0}^{n}v_{2((2l+1)i+r)}^{p^{2(n-i)}}\right) \left(\prod_{i=0}^{n-1}v_{2((2l+1)i+l+r)+1}^{p^{2(n-i)-1}}\right)}
,\quad r=\overline{0,l},
\\
y_{2((2l+1)n+r)+1}&=\frac{y_{r-2l-1}^{p^{2n+1}}}
	{\left(\prod_{i=0}^{n}v_{2((2l+1)i+r)+1}^{p^{2(n-i)}}\right) \left(\prod_{i=0}^{n-1}v_{2((2l+1)i+l+r+1)}^{p^{2(n-i)-1}}\right)}
,\quad r=\overline{0,l-1},
\\
y_{2((2l+1)n+l+r)+1}&=\frac{y_{2(r-l)-1}^{p^{2n+2}}}{\prod_{i=0}^{n}v_{2((2l+1)i+r)}^{p^{2(n-i)+1}}v_{2((2l+1)i+l+r)+1}^{p^{2(n-i)}}}
,\quad r=\overline{0,l},
\\
y_{2((2l+1)n+l+r+1)}&=\frac{y_{2(r-l)}^{p^{2n+2}}}{\prod_{i=0}^{n}v_{2((2l+1)i+r)+1}^{p^{2(n-i)+1}}v_{2((2l+1)i+l+r+1)}^{p^{2(n-i)}}}
,\quad r=\overline{0,l-1}.
\end{align*}
%--------------------------------------------------------------- THEOREM: SOLUTION FORM c
  \item[{\rm (c)}] If $k=1$, then for all $n \in \mathbb{N}_0$, we have
\[
	x_{2n} =\frac{x_{-1}^{p^{2n+1}}}{\left(\prod_{i=0}^{n}u_{2i}^{p^{2(n-i)}}\right) \left(\prod_{i=0}^{n-1}u_{2i+1}^{p^{2(n-i)-1}}\right)},
			\qquad x_{2n+1}=\frac{x_{-1}^{p^{2n+2}}}{\prod_{i=0}^{n}u_{2i}^{p^{2(n-i)+1}}u_{2i+1}^{p^{2(n-i)}}};
			\]
			\[
	y_{2n} =\frac{y_{-1}^{p^{2n+1}}}{\left(\prod_{i=0}^{n}v_{2i}^{p^{2(n-i)}}\right) \left(\prod_{i=0}^{n-1}v_{2i+1}^{p^{2(n-i)-1}}\right)},
			\qquad y_{2n+1}=\frac{y_{-1}^{p^{2n+2}}}{\prod_{i=0}^{n}v_{2i}^{p^{2(n-i)+1}}v_{2i+1}^{p^{2(n-i)}}}.
\]
\end{enumerate}
The even and odd terms of the sequences $(u_{n})$ and $(v_{n})$ are given by equations \eqref{e6}-\eqref{e9}.
\end{theorem}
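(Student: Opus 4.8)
The plan is to treat Theorem~\ref{th2} as the formal record of the reduction carried out above, and to supply the one genuinely inductive ingredient that the narrative derivation took for granted. The backbone of the argument is the substitution \eqref{e2}, namely $u_{n}=x_{n-k}^{p}/x_{n}$ and $v_{n}=y_{n-k}^{p}/y_{n}$, which is legitimate precisely because the solution is assumed well-defined (so every $x_{n}$, $y_{n}$ and every denominator $ay_{n-k}^{p}+by_{n}$, $\alpha x_{n-k}^{p}+\beta x_{n}$ is nonzero). Under this substitution the nonlinear system \eqref{T} turns into the linear coupled system $u_{n+1}=av_{n}+b$, $v_{n+1}=\alpha u_{n}+\beta$; decoupling yields the second-order linear recurrences $u_{n+2}=a\alpha u_{n}+(a\beta+b)$ and $v_{n+2}=a\alpha v_{n}+(\alpha b+\beta)$, to which Lemma~\ref{lem1} (with $a$ replaced by $a\alpha$) applies verbatim along the even- and odd-indexed subsequences, producing \eqref{f1}--\eqref{f32}. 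Inserting the explicit initial data $u_{0}=x_{-k}^{p}/x_{0}$, $u_{1}=av_{0}+b=(ay_{-k}^{p}+by_{0})/y_{0}$, $v_{0}=y_{-k}^{p}/y_{0}$, $v_{1}=\alpha u_{0}+\beta=(\alpha x_{-k}^{p}+\beta x_{0})/x_{0}$ then gives \eqref{e6}--\eqref{e9}, which is exactly the last sentence of the theorem.

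Next I would establish the displayed formulas for $x_{kn+r}$ and $y_{kn+r}$ in \eqref{e10} by induction on $n$. Rewriting \eqref{e2} as $x_{n}=x_{n-k}^{p}/u_{n}$, $y_{n}=y_{n-k}^{p}/v_{n}$ and specializing the index to $kn+r$ with $r=\overline{0,k-1}$ gives the one-step relation $x_{kn+r}=x_{k(n-1)+r}^{p}/u_{kn+r}$ (and similarly for $y$); the base case $n=0$ is the definition, and the inductive step is a direct substitution that raises the exponent of $x_{r-k}$ to the next power of $p$ and appends one more factor to the product, using the conventions $\prod_{j=i}^{k}A_{j}=1$ and $\sum_{j=i}^{k}A_{j}=0$ for $k<i$ fixed before the theorem. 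This proves \eqref{e10} for every residue class.

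The three items of the theorem are then obtained by sorting the single family \eqref{e10} according to a further parity decomposition, and this bookkeeping is where the real work lies. For $k=2l$ one writes $r=2r'+j$ with $j\in\{0,1\}$ and observes that the index $ki+r$ appearing in $u_{ki+r}$ keeps the fixed parity $j$ as $i$ varies, so that only the $u$'s (resp. $v$'s) of a single parity enter each formula; substituting \eqref{e6}--\eqref{e9} gives part~(a). For $k=2l+1$ the parity of $ki+r$ alternates with $i$, so one additionally splits the outer index $n$ into its even and odd parts, which is precisely the content of \eqref{e11}--\eqref{e14}; re-indexing $k(2i)+r$ and $k(2i+1)+r$ into the forms $2(\cdot)$ and $2(\cdot)+1$, so as to match the arguments of $u_{2n+i}$, $v_{2n+i}$, produces the four $x$-identities and four $y$-identities of part~(b) when $l\geq 1$, and their degenerate version (in which $r$ is forced to be $0$) gives part~(c) when $l=0$. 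I expect the main obstacle to be nothing conceptual but rather the careful verification that the exponent towers $p^{2(n-i)}$, $p^{2(n-i)\pm1}$ and the index shifts $r\mapsto 2(r-l)-1$, $r\mapsto r-2l-1$, $r\mapsto 2(r-l)$, and so on, are matched correctly across all the sub-ranges of $r$; this is best dispatched by checking, for each displayed identity separately, that both sides obey the same recurrence $x_{m}=x_{m-k}^{p}/u_{m}$ and agree at the smallest admissible index, which reduces the entire case analysis to a finite and purely mechanical check.
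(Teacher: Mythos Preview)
Your proposal is correct and follows essentially the same approach as the paper: the substitution \eqref{e2}, the decoupling into second-order linear recurrences, the application of Lemma~\ref{lem1} to obtain \eqref{e6}--\eqref{e9}, the iteration yielding \eqref{e10}, and the parity-based case split on $k$ and the residue $r$ are exactly the steps the authors carry out in the discussion preceding the theorem. Your closing suggestion---to verify each displayed identity by checking that both sides satisfy $x_{m}=x_{m-k}^{p}/u_{m}$ and agree at the smallest index---is a clean way to certify the index bookkeeping that the paper leaves implicit, but it does not constitute a different method.
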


%----------------------------------- REMARK
\begin{remark}
Particular cases of System \eqref{T} were investigated in \cite{toua1} ($p=1$, $k=2$, $a=\alpha=1$, $b=\pm1$, $\beta=\pm1$).
It is easy to check that the formulae of the solutions in \cite{toua1}, are direct consequences of the formulae obtained in the present work. 
For example if we take $p=1$, $k=2$, $a=\alpha=1$, $b=\beta=1$, then we recover system \eqref{O}. 
In this case $l=1$, $r=0$, so, it follows from Theorem \eqref{th2}, that
\[
x_{2n}=\frac{x_{-2}}{\prod_{i=0}^{n}u_{2i}},\ \ 
	x_{2n+1}=\frac{x_{-1}}{\prod_{i=0}^{n}u_{2i+1}},\ \ 
y_{2n}=\frac{y_{-2}}{\prod_{i=0}^{n}v_{2i}},\ \ 
	y_{2n+1}=\frac{y_{-1}}{\prod_{i=0}^{n}v_{2i+1}}.
\]

Furthermore, if we let $\alpha=a$ and $\beta=b$ and choose initial conditions such that $y_{-i}=x_{-i},i=0,1,\ldots,k$, 
then System \eqref{T} will reduced to the nonlinear difference equation
\begin{equation*}
x_{n+1}=\frac{x_{n-k+1}^{p}x_{n}}{a x_{n-k}^{p}+b x_{n}},
\qquad n \in \mathbb{N}_{0},\ p, k\in \mathbb{N}.
\end{equation*}
\end{remark}

%---------------------------------- THE CASE P =1 -----------------------------------------
\section{The Case $p=1$}

In this section, we focus our attention on a special case of system \eqref{T}.
In particular, we examine the boundedness, the asymptotic behavior and periodicity of solutions of system \eqref{T} with $p=1$, 
i.e, the system
\begin{equation}
	x_{n+1} = \frac{x_{n-k+1}y_{n}}{ay_{n-k}+by_{n}},\qquad
	y_{n+1} = \frac{y_{n-k+1}x_{n}}{\alpha x_{n-k}+\beta x_{n}},
	\qquad n \in\mathbb{N}_{0},\ k \in \mathbb{N}.\label{e24}
\end{equation}
Throughout this section, we also assume that the set of initial values $\min\{x_{-i}, y_{-i}\}_{i=-k}^0$ satisfy the inequality conditions 
$\min\{x_{-i}, y_{-i}\} \geq 0$ and $\min\{x_{-k} + x_0, y_{-k} + y_0\} > 0$.

We start with the following theorem concerning the boundedness of solutions of system \eqref{e24}.

\begin{theorem}
\label{thm3}
Consider the system \eqref{e24} such that
\begin{enumerate}
\item[{\rm (H.1)}] $\min\{b, \beta\} \geq 1$; or 
\item[{\rm (H.2)}] $\min\{a, \alpha\} \geq 1$, $ay_{-k} \geq y_{0}$ and $\alpha x_{-k}\geq x_{0}$.
\end{enumerate} 
Then, every positive solution is bounded.
\end{theorem}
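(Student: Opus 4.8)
The plan is to show that, under either hypothesis, each of $(x_n)$ and $(y_n)$ decreases (weakly) along every residue class modulo $k$; boundedness then follows by telescoping back to the initial data, and positivity of the solution supplies the lower bound $0$. Concretely, I would reduce the statement to the two inequalities
\[
x_{n+1}\le x_{n-k+1}\qquad\text{and}\qquad y_{n+1}\le y_{n-k+1},\qquad n\in\mathbb{N}_0,
\]
i.e.\ $x_j\le x_{j-k}$ and $y_j\le y_{j-k}$ for all $j\ge1$. Granting these, for any index $m\ge1$ one iterates $x_j\le x_{j-k}$ along $m,m-k,m-2k,\dots$ until reaching an index in $\{-k,\dots,0\}$, obtaining $x_m\le\max\{x_{-k},\dots,x_0\}$, and likewise $y_m\le\max\{y_{-k},\dots,y_0\}$; together with $x_m,y_m>0$ this yields the desired bounds.

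For case (H.1) the two inequalities are immediate. Since the solution is positive and (as is tacit in this section) $a,\alpha\ge0$, while $b,\beta\ge1$, one has $ay_{n-k}+by_n\ge by_n\ge y_n>0$, whence
\[
x_{n+1}=\frac{x_{n-k+1}\,y_n}{ay_{n-k}+by_n}\le\frac{x_{n-k+1}\,y_n}{y_n}=x_{n-k+1},
\]
and symmetrically $y_{n+1}\le y_{n-k+1}$, for every $n\in\mathbb{N}_0$.

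For case (H.2) I would pass to the auxiliary sequences of \eqref{e2} with $p=1$, namely $u_n=x_{n-k}/x_n$ and $v_n=y_{n-k}/y_n$, which satisfy the linear recursions $u_{n+1}=av_n+b$, $v_{n+1}=\alpha u_n+\beta$ and the identities $x_{n+1}=x_{n-k+1}/u_{n+1}$, $y_{n+1}=y_{n-k+1}/v_{n+1}$. The extra hypotheses $ay_{-k}\ge y_0$ and $\alpha x_{-k}\ge x_0$ say precisely that $av_0\ge1$ and $\alpha u_0\ge1$, so (using $b,\beta\ge0$) $u_1=av_0+b\ge1$ and $v_1=\alpha u_0+\beta\ge1$. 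A one-line induction using $a,\alpha\ge1$ and $b,\beta\ge0$ then shows that $u_n\ge1$ and $v_n\ge1$ force $u_{n+1}=av_n+b\ge a\ge1$ and $v_{n+1}=\alpha u_n+\beta\ge\alpha\ge1$; hence $u_n\ge1$ and $v_n\ge1$ for all $n\ge1$, and therefore $x_{n+1}=x_{n-k+1}/u_{n+1}\le x_{n-k+1}$ and $y_{n+1}=y_{n-k+1}/v_{n+1}\le y_{n-k+1}$ for all $n\in\mathbb{N}_0$, which is what was needed.

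The only genuinely delicate point is case (H.2): one must correctly translate the two ``single-step'' conditions on the initial values into $av_0\ge1$, $\alpha u_0\ge1$ in order to launch the induction on $(u_n,v_n)$, and then check that the monotonicity $x_{n+1}\le x_{n-k+1}$ indeed holds from $n=0$ onward, so that the telescoping actually lands on the prescribed initial data. The remaining ingredients — counting the number of $k$-steps needed to return an index to $\{-k,\dots,0\}$, and noting that a positive solution is bounded below by $0$ — are routine; one should also state explicitly that nonnegativity of the coefficients is being invoked (in (H.1) through $ay_{n-k}\ge0$, in (H.2) through $b,\beta\ge0$).
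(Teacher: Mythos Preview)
Your proof is correct and follows essentially the same strategy as the paper: in both hypotheses you establish $x_{n+1}\le x_{n-k+1}$ and $y_{n+1}\le y_{n-k+1}$ for all $n\ge0$, and then telescope along residue classes modulo $k$ back to the initial data. The only cosmetic difference is that for (H.2) you run the induction on the linearized variables $u_n,v_n$ (showing $u_n,v_n\ge1$ for $n\ge1$), whereas the paper carries out the equivalent step-by-step induction directly on $x_n,y_n$; your packaging is arguably tidier but the content is the same.
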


\begin{proof}
Let $\left\{(x_{n},y_{n})\right\}_{n\geq -k}$ be a solution of \eqref{e24}.

%------------------------------------------- H.1
\underline{Hypothesis (H.1) is satisfied}.
Suppose that $\min\{b,\beta\} \geq1$, then it follows from system \eqref{e24} that
% $$x_{n+1}\leq\frac{x_{n-k+1}y_{n}}{by_{n}}=\frac{x_{n-k+1}}{b}\leq x_{n-k+1},\;n=0,1,\cdots,$$
% $$y_{n+1}\leq\frac{y_{n-k+1}x_{n}}{\beta x_{n}}=\frac{y_{n-k+1}}{\beta}\leq y_{n-k+1},\;n=0,1,\cdots,$$
\[
	x_{n+1}\leq\frac{x_{n-k+1}}{b}\leq x_{n-k+1}
	\quad \text{and} \quad
	y_{n+1}\leq\frac{y_{n-k+1}}{b}\leq y_{n-k+1}
	\quad \text{for all}\ n \in \mathbb{N}_0. 
\]
and so the subsequences
$\{x_{kn-i}\}_{n \geq 0}$ and $\{y_{kn-i}\}_{n \geq 0}$, $i = 0, \ldots, k-1$, are decreasing. 
Moreover, we have
\[
x_{n}\leq \max_{i = \overline{0,k-1}}\left\{\frac{x_{-i}}{b}\right\}
\quad \text{and} \quad
y_{n}\leq \max_{i = \overline{0,k-1}}\left\{\frac{y_{-i}}{\beta}\right\}
\quad \text{for all}\ n \in \mathbb{N}_0.
\]
Thus, the solution solution is bounded.

%------------------------------------------- H.2
\underline{Hypothesis (H.2) is satisfied}.
If, on the other hand,  $\min\{a,\alpha\}\geq1$, $ay_{-k}\geq y_{0}$ and $\alpha x_{-k}\geq x_{0}$, 
then it follows from \eqref{e24} that for $n=0$, we get
\[
x_{1}\leq\frac{x_{-k+1}y_{0}}{ay_{-k}}\leq x_{-k+1} 
\quad \text{and} \quad
y_{1}\leq\frac{y_{-k+1}x_{0}}{\alpha x_{-k}}\leq y_{-k+1}
\]
and from this, together with the assumption that $\min\{a,\alpha\}\geq1$, we get for $n=1$
\[
x_{2}\leq\frac{x_{-k+2}y_{1}}{ay_{-k+1}}\leq x_{-k+2}
\quad \text{and} \quad
y_{2}\leq\frac{y_{-k+2}x_{1}}{\alpha x_{-k+1}}\leq y_{-k+2}.
\]
Continuing the process, we obtain, for $n=k-1$, 
\[
x_{k} \leq \frac{x_{0}y_{k-1}}{ay_{-1}}\leq x_{0}
\quad \text{and} \quad
y_{k} \leq \frac{y_{0}x_{k-1}}{\alpha x_{-1}}\leq y_{0}.
\]
It follows by induction that the subsequences
$\{x_{kn-i}\}_{n \geq 0}$ and $\{y_{kn-i}\}_{n \geq 0}$, $i = 0,\ldots, k-1$, are decreasing. 
Furthermore, we have
\[
x_{n}\leq \max_{i = \overline{0,k-1}}\left\{x_{-i}\right\}
\quad \text{and} \quad
y_{n}\leq \max_{i = \overline{0,k-1}}\left\{y_{-i}\right\}
\quad \text{for all}\ n \in \mathbb{N}_0.
\]
Hence, in this case, the solution solution is also bounded. 
This completes the proof of the theorem.
\end{proof}

In the next theorem, we give the necessary and sufficient conditions for the solutions of system \eqref{e24} to be periodic of period $k$ (not necessary prime).

\begin{theorem} Let $\left\{(x_{n},y_{n})\right\}_{n\geq -k}$ be a solution of \eqref{e24}.
Then, $(x_{n},y_{n})=(x_{n-k},y_{n-k})$ for all $n\in \mathbb{N}_{0}$ if and only if 
$(x_{0},y_{0})=(x_{-k},y_{-k})$ and $a+b=\alpha+\beta=1$.
\end{theorem}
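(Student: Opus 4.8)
The plan is to establish both implications directly from the recurrences in \eqref{e24}, working with the sequences $(x_n)$ and $(y_n)$ themselves rather than with the auxiliary sequences $u_n,v_n$ of Section~2; the point is that the standing well-definedness hypothesis already supplies all the non-vanishing that the cancellations below require, so neither Lemma~\ref{lem1} nor the closed forms of Theorem~\ref{th2} is needed.

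\emph{Sufficiency.} Suppose $(x_0,y_0)=(x_{-k},y_{-k})$ and $a+b=\alpha+\beta=1$. I would show by induction on $n$ that $x_n=x_{n-k}$ and $y_n=y_{n-k}$ for every $n\in\mathbb{N}_0$, the case $n=0$ being the hypothesis. For the passage from $n-1$ to $n$ (with $n\ge1$), I evaluate the first equation of \eqref{e24} at index $n-1$; the inductive hypothesis at that index gives $y_{n-1-k}=y_{n-1}$, so the denominator $a y_{n-1-k}+b y_{n-1}$ collapses to $(a+b)y_{n-1}=y_{n-1}$, which is nonzero by well-definedness. Cancelling it yields $x_n=x_{n-k}$, and the argument for $y_n=y_{n-k}$ from the second equation is symmetric, using $\alpha+\beta=1$ together with $x_{n-1}\ne0$.

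\emph{Necessity.} Conversely, assume $(x_n,y_n)=(x_{n-k},y_{n-k})$ for all $n\ge0$. Taking $n=0$ gives $(x_0,y_0)=(x_{-k},y_{-k})$ at once. The key observation is that periodicity makes the denominators in \eqref{e24} degenerate: for each $n\ge0$ one has $a y_{n-k}+b y_n=(a+b)y_n$ and $\alpha x_{n-k}+\beta x_n=(\alpha+\beta)x_n$, and since a well-defined solution keeps the product of these two quantities nonzero, it follows that $x_n\ne0$ and $y_n\ne0$ for all $n\ge0$, hence, by periodicity again, for all $n\ge-k$. Now using the periodicity relations $x_{n+1}=x_{n-k+1}$ and $y_{n-k}=y_n$ in the first equation of \eqref{e24} and clearing the common nonzero factor $y_n$ gives $x_{n-k+1}=x_{n-k+1}/(a+b)$; since $x_{n-k+1}\ne0$, this forces $a+b=1$, and the second equation yields $\alpha+\beta=1$ in exactly the same way.

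No estimates enter the argument, so the only delicate point is precisely this book-keeping of non-vanishing: one has to be sure that the cancellation $x_n=(x_{n-k}y_{n-1})/y_{n-1}$, and its analogues, are legitimate, and that in the necessity part the concluding identity $a+b=1$ is not being read off from the vacuous relation $0=0$. As indicated above, this is not a genuine obstacle but a one-line consequence of the hypothesis $(a y_{n-k}+b y_n)(\alpha x_{n-k}+\beta x_n)\ne0$: under the pertinent periodicity relations every denominator appearing in \eqref{e24} equals a scalar multiple of a single term $y_{n-1}$ (or $x_{n-1}$), so well-definedness propagates non-vanishing to the entire sequences $(x_n)$ and $(y_n)$.
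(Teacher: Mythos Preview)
Your argument is correct and follows essentially the same route as the paper: both directions are handled by plugging the periodicity relations directly into the recurrences \eqref{e24} and cancelling, with sufficiency done by a step-by-step induction and necessity read off from a single instance of the recurrence. Your treatment is in fact slightly more careful than the paper's, since you explicitly derive the non-vanishing of all $x_n$ and $y_n$ from the well-definedness hypothesis before cancelling, whereas the paper simply divides by $x_{-k+1}$ and $y_{-k+1}$ without comment.
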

\begin{proof}
First, assume that $(x_{n},y_{n})=(x_{n-k},y_{n-k})$ for all $n\in \mathbb{N}_{0}$.
Particularly, we have $(x_{0},y_{0})=(x_{-k},y_{-k})$ and
\[
x_{-k+1}=x_{1}=\frac{x_{-k+1}y_{0}}{ay_{-k}+by_{0}}
\quad \text{and} \quad
y_{-k+1}=y_{1}=\frac{y_{-k+1}x_{0}}{\alpha x_{-k}+\beta x_{0}}.
\]
These equations imply that 
\[
\frac{1}{a+b}= \frac{1}{\alpha+\beta} = 1
\quad \text{or equivalently}, \quad
a+b=\alpha+\beta=1
\]
Conversely suppose that $(x_{0},y_{0})=(x_{-k},y_{-k})$ and $a+b=\alpha+\beta=1$.
Then, from \eqref{e24} we get
\[
x_{1}=\frac{x_{-k+1}y_{0}}{ay_{-k}+by_{0}}=\frac{x_{-k+1}}{a+b}=x_{-k+1},
\qquad y_{1}=\frac{y_{-k+1}x_{0}}{\alpha x_{-k}+\beta
x_{0}}=\frac{y_{-k+1}}{\alpha+\beta}=y_{-k+1}.
\]
Again, from \eqref{e24}, and using the above relation we get
\[
x_{2}=\frac{x_{-k+2}y_{1}}{ay_{-k+1}+by_{1}}=\frac{x_{-k+2}}{a+b}=x_{-k+2},
\qquad 
y_{2}=\frac{y_{-k+2}x_{1}}{\alpha x_{-k+1}+\beta x_{1}}=\frac{y_{-k+2}}{\alpha+\beta}=y_{-k+2}.
\]
Continuing the process and by principle of induction, we arrive at the desired result.
\end{proof}

The next result provide the limiting properties of solutions of system \eqref{e24}.

\begin{theorem}
Let $\left\{(x_{n},y_{n})\right\}_{n\geq -k}$ be a solution of system \eqref{e24}. 
Then, the following statements hold.

\begin{enumerate}
       \item[{\rm (a)}] If $a\alpha >1$, then $\displaystyle\lim_{n\rightarrow\infty}(x_{n},y_{n})=(0,0)$.
       \item[{\rm (b)}] If $a\alpha =1$, then $\displaystyle\lim_{n\rightarrow\infty}(x_{n},y_{n})=(0,0)$.
       \item[{\rm (c)}] If $a\alpha <1$, then
    	\[
    		\lim_{n\rightarrow \infty} x_{n}=
    		\left\{
    			\begin{array}{ll}
%        			0, & \dfrac{a\beta+b}{1-a\alpha}>1,\\[1em]
%        			\infty, & \dfrac{a\beta+b}{1-a\alpha}<1.
				0, & A > 1,\\[0.5em]
				\infty, & A < 1.
   			 \end{array}
    		\right.\quad \text{and} \quad
  		 \lim_{n\rightarrow \infty} y_{n}=
    		 \left\{
    			\begin{array}{ll}
%        			0, & \dfrac{\alpha b+\beta}{1-a\alpha}>1,\\[1em]
%        			\infty, & \dfrac{\alpha b+\beta}{1-a\alpha}<1.\\
				0, & B > 1,\\[0.5em]
				\infty, & B < 1.
    		\end{array}
    		\right.,
     \]
     where $A = \dfrac{a\beta+b}{1-a\alpha}$ and $B = \dfrac{\alpha b+\beta}{1-a\alpha}$, respectively.
   \end{enumerate}

\end{theorem}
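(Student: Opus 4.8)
The plan is to read the asymptotics of $x_n$ and $y_n$ straight off the closed forms already in hand. Taking $p=1$ in \eqref{e10} gives, for every $r=\overline{0,k-1}$ and $n\in\mathbb{N}_0$,
\[
x_{kn+r}=\frac{x_{r-k}}{\prod_{i=0}^{n}u_{ki+r}},\qquad
y_{kn+r}=\frac{y_{r-k}}{\prod_{i=0}^{n}v_{ki+r}},
\]
and the even/odd terms of $(u_m)$ and $(v_m)$ are given explicitly by \eqref{e6}--\eqref{e9}. For each fixed $r$ the indices $\{ki+r\}_{i\ge0}$ form an arithmetic progression with common difference $k\ge1$, so $ki+r\to\infty$; hence it suffices to determine $\lim_{m\to\infty}u_m$ and $\lim_{m\to\infty}v_m$ and substitute. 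Since the resulting limit of $x_{kn+r}$ (resp.\ $y_{kn+r}$) will be the same for all $r$, and $\bigcup_{r}\{kn+r:n\ge0\}=\mathbb{N}_0$, it will carry over to the whole sequence.

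First I would settle the behavior of $(u_m)$. From $u_{m+2}=a\alpha\,u_m+(a\beta+b)$, equivalently from \eqref{e6}--\eqref{e7}, one has for $i\in\{0,1\}$
\[
u_{2n+i}=(a\alpha)^{n}\Bigl(u_i+\tfrac{a\beta+b}{a\alpha-1}\Bigr)-\tfrac{a\beta+b}{a\alpha-1}\ \ (a\alpha\neq1),
\qquad
u_{2n+i}=u_i+(a\beta+b)\,n\ \ (a\alpha=1).
\]
For a well-defined positive solution every $u_m>0$, and inspecting the formulas gives: $u_m\to+\infty$ when $a\alpha>1$; $u_m\to+\infty$ when $a\alpha=1$; and $u_m\to A=\tfrac{a\beta+b}{1-a\alpha}$ when $0\le a\alpha<1$ (then $(a\alpha)^n\to0$). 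The identical computation with $a\beta+b$ replaced by $\alpha b+\beta$ yields $v_m\to+\infty$ when $a\alpha\ge1$ and $v_m\to B=\tfrac{\alpha b+\beta}{1-a\alpha}$ when $0\le a\alpha<1$. Note that $A$ and $B$ are precisely the fixed points of the coupled system $u_{m+1}=av_m+b$, $v_{m+1}=\alpha u_m+\beta$, which explains their appearance in part (c).

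Next I would feed this into the products. If $a\alpha\ge1$, then $u_{ki+r}\ge2$ for all large $i$, so $\prod_{i=0}^{n}u_{ki+r}\to+\infty$; hence $x_{kn+r}\to0$ for each $r$, so $x_n\to0$, and likewise $y_n\to0$, which is (a) and (b). If $0\le a\alpha<1$, write $\log\prod_{i=0}^{n}u_{ki+r}=\sum_{i=0}^{n}\log u_{ki+r}$; since $\log u_{ki+r}\to\log A$, the partial sum tends to $+\infty$ when $A>1$ (so $x_{kn+r}\to0$ and $x_n\to0$) and to $-\infty$ when $A<1$ (positivity forces $A\ge0$, so either $0<A<1$ and $\log A<0$, or $A=0$ and $u_{ki+r}\to0$), so $x_{kn+r}\to\infty$ and $x_n\to\infty$. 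Replacing $u,A$ by $v,B$ gives the dichotomy for $y_n$ in (c).

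The main obstacle I foresee is not analytic but bookkeeping in the borderline configurations. When $a\alpha>1$ with $u_i=-\tfrac{a\beta+b}{a\alpha-1}$ for some $i$, or $a\alpha=1$ with $a\beta+b=0$, the corresponding parity subsequence of $(u_m)$ is a positive constant rather than a divergent sequence, and that constant need not exceed $1$; one then argues separately (for $k$ odd the two parities interleave within $\{ki+r\}_i$, so the other, divergent parity still drives the product to $\infty$; for $k$ even one instead excludes such initial data). Similarly, when $a\alpha<0$ (in particular $a\alpha\le-1$) the factor $(a\alpha)^n$ no longer vanishes, but positivity of the solution then forces the orbit already to sit at the fixed point $(u_m,v_m)=(A,B)$, so the stated limits still hold. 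Apart from these degenerate situations, the proof is exactly the elementary limit computation above combined with the closed forms of Section 2.
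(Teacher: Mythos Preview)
Your proposal is correct and follows essentially the same approach as the paper: both extract the asymptotics from the product representation \eqref{e10} with $p=1$ by first determining $\lim_{m}u_m$ and $\lim_{m}v_m$ from \eqref{e6}--\eqref{e9} and then reading off the behavior of $\prod_{i} u_{ki+r}$ and $\prod_{i} v_{ki+r}$. The only differences are cosmetic---you pass through $\sum_i\log u_{ki+r}$ in case (c) where the paper argues directly on the product, and you flag some degenerate configurations that the paper's proof simply does not address---but the core argument is identical.
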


\begin{proof}
We only prove detailedly properties (a), (b) and (c) for the limits of $x_{n}$. 
The limits of $y_{n}$ follows a similar inductive lines. 
First, note that from \eqref{e10} the limit of $x_{kn+r}$ as $n\rightarrow\infty$
depends on the limit of $u_{kn+r}$ as $n\rightarrow\infty$, which, on the other hand, depends on the value of $a\alpha$.
\begin{enumerate}

%------------------------------------------------------------------- PROOF OF FIRST RESULT
    \item[(a)] When $a\alpha>1$, $\dfrac{\left(a\alpha\right)^{n}-1}{a\alpha-1}\rightarrow \infty$ as $n\rightarrow \infty$. 
    			So, from \eqref{e6} and \eqref{e7}, we have $u_{n}\rightarrow \infty$ as $n\rightarrow\infty$.
    			Then, in view of \eqref{e10}, $x_{n}\rightarrow 0$ as $n\rightarrow\infty$. 
    			Similarly, we obtain $y_{n}\rightarrow 0$ as $n\rightarrow\infty$.
%------------------------------------------------------------------- PROOF OF SECOND RESULT
    \item[(b)] When $a\alpha=1$, then from \eqref{e6} and \eqref{e7} we get
			$
				\lim_{n\rightarrow\infty}u_{n}
					=\lim_{n\rightarrow\infty}u_{2n}
					=\lim_{n\rightarrow\infty}u_{2n+1}
					=\lim_{n\rightarrow\infty}(a\beta+b)n
					=\infty
			$.
		Hence, from \eqref{e10}, we have $x_{n}\rightarrow 0$ as $n\rightarrow\infty$.
		Similarly, we have $y_{n}\rightarrow 0$ as $n\rightarrow\infty$.
%------------------------------------------------------------------- PROOF OF  THIRD  RESULT
    \item[(c)] When $a\alpha<1$, then $(a\alpha)^{n}\rightarrow 0$ as $n\rightarrow \infty$. 
    			So, in reference to \eqref{e6} and \eqref{e7}, we have
			\begin{align*}
			\lim_{n\rightarrow\infty}u_{n}
				&=\lim_{n\rightarrow\infty}u_{2n}
				=\lim_{n\rightarrow\infty}u_{2n+1}
				=\frac{a\beta+b}{1-a\alpha} 
				=:A,\\
			\lim_{n\rightarrow\infty}v_{n}
				&=\lim_{n\rightarrow\infty}v_{2n}
				=\lim_{n\rightarrow\infty}v_{2n+1}
				=\frac{\alpha b+\beta}{1-a\alpha}
				=:B.
			\end{align*}
		Let $r \in \{0,\ldots, k-1\}$ be fixed.
%\begin{enumerate}
%%---------------------------------------------------------VOIR------------------------------------------------%
%    \item[(c.1)] If $A > 1$, then $\lim_{n\rightarrow\infty}\prod_{m=0}^{n} u_{km+r}=\infty$, for all $r = \overline{0, l-1}$. 
%    			Thus, $\lim_{n\rightarrow\infty} x_{kn+r}=0$, for all $r$, i.e., $\displaystyle{\lim_{n\rightarrow\infty}}x_{n}=0$.\\
%    \item[(c.2)] If $A < 1$, then $\lim_{n\rightarrow\infty}\{1/u_{kn+r}\}={\lim_{n\rightarrow\infty}}\{1/u_{n}\}=A>1$.
%			Hence, $\lim_{n\rightarrow\infty}\prod_{m=0}^{n}\{1/u_{km+r}\}=\infty$, for all $r$. 
%			Thus, $\lim_{n\rightarrow\infty} x_{kn+r}=\infty$, i.e., $\lim_{n\rightarrow\infty}x_{n}=\infty$.\\
%    \item[(c.3)] If $B>1$, then $\lim_{n\rightarrow\infty}\prod_{m=0}^{n} v_{km+r}=\infty$, for all $r$. 
%    			Thus, $\lim_{n\rightarrow\infty} y_{kn+r}=0$, for all $r$, i.e., $\lim_{n\rightarrow\infty} y_{n}=0$.\\
%    \item[(c.4)] If $B<1$, then $\lim_{n\rightarrow\infty} \{1/v_{kn+r}\}=B>1$.
%    			Hence, $\lim_{n\rightarrow\infty}\prod_{m=0}^{n}\{1/v_{km+r}\}=\infty$, for all $r$.
%    			Thus, $\lim_{n\rightarrow\infty} y_{kn+r}=\infty$, i.e., $\lim_{n\rightarrow\infty} y_{n}=\infty$.
%\end{enumerate}
		If $A > 1$, then $\lim_{n\rightarrow\infty}\prod_{m=0}^{n} u_{km+r}=\infty$. 
    		Therefore, $\lim_{n\rightarrow\infty} x_{kn+r}=0$ or equivalently, $\displaystyle{\lim_{n\rightarrow\infty}}x_{n}=0$.
    		If, on the other hand, $A < 1$, then $\lim_{n\rightarrow\infty}\{1/u_{kn+r}\}={\lim_{n\rightarrow\infty}}\{1/u_{n}\}=A>1$.
		Hence, we have the product limit $\lim_{n\rightarrow\infty}\prod_{m=0}^{n}\{1/u_{km+r}\}=\infty$. 
		Thus, $\lim_{n\rightarrow\infty} x_{kn+r}=\infty$ or equivalently, $\lim_{n\rightarrow\infty}x_{n}=\infty$.
		
		Meanwhile, if $B>1$, then $\lim_{n\rightarrow\infty}\prod_{m=0}^{n} v_{km+r}=\infty$ which, in turn, would imply that $\lim_{n\rightarrow\infty} y_{n}=0$.
		However, if $B<1$, then we have $\lim_{n\rightarrow\infty} \{1/v_{kn+r}\}=B>1$. 
		This limit would then yield the product limit $\lim_{n\rightarrow\infty}\prod_{m=0}^{n}\{1/v_{km+r}\}=\infty$.
		Thus, $\lim_{n\rightarrow\infty} y_{kn+r}=\infty$ or equivalently, $\lim_{n\rightarrow\infty} y_{n}=\infty$.
		This proves the third case, completing the proof of the theorem.
\end{enumerate}

\end{proof}

The next theorem provides the behavior of solutions of system \eqref{e24} for the cases $A = 1$ and $B = 1$.

\begin{theorem}
\label{thm6}
Let $\left\{(x_{n},y_{n})\right\}_{n\geq -k}$ be a solution of \eqref{e24} with $k= 2l$ $(l=1,2,\ldots)$. 
Assume that $|a\alpha|<1$. 
Then, the following statements hold.
\begin{enumerate}
    \item[{\rm (a)}] If $A = 1$ (resp. $B = 1$) and $x_{-k}\neq x_{0}$ (resp. $y_{-k}\neq y_{0}$), 
    			then the sub-sequences $\{x_{kn+2r}\}$ (resp. $\{y_{kn+2r}\}$), for all $r=\overline{0,\frac{k}{2}},$ are convergent.
    \item[{\rm (b)}] If $A = 1$ (resp. $B = 1$) and $x_{-k}=x_{0}$ (resp. $y_{-k}=y_{0}$), then $x_{kn+2r}=x_{2r-k}$ (resp. $y_{kn+2r}=y_{2r-k}$) for all $r=\overline{0,\frac{k}{2}}$.
    \item[{\rm (c)}] If $A = 1$ (resp. $B = 1$) and $ay_{-k}\neq (1-b)y_{0}$ (resp. $\alpha x_{-k}\neq (1-\beta)x_{0}$), 
    			then the sub-sequences $\{x_{kn+2r+1}\}$ (resp. $\{y_{kn+2r+1}\}$), for all $r=\overline{0,\frac{k}{2}}$, are convergent.
    \item[{\rm(d)}] If $A = 1$ (resp. $B = 1$) and $ay_{-k}=(1-b)y_{0}$ (resp. $\alpha x_{-k}=(1-\beta)x_{0}$), 
    			then $x_{kn+2r+1}=x_{2r-k+1}$ (resp. $y_{kn+2r+1}=y_{2r-k+1}$) for all $r=\overline{0,\frac{k}{2}}$.
    \end{enumerate}

\end{theorem}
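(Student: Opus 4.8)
The plan is to push the solution formulas of Theorem~\ref{th2}(a) through the explicit expressions \eqref{e6}--\eqref{e9} for $(u_n)$ and $(v_n)$. Since $p=1$ here, part~(a) of Theorem~\ref{th2} collapses (every power of $p$ reduces to $1$) to
\[
x_{kn+2r+j}=\frac{x_{2r-k+j}}{\prod_{i=0}^{n}u_{2(li+r)+j}},\qquad
y_{kn+2r+j}=\frac{y_{2r-k+j}}{\prod_{i=0}^{n}v_{2(li+r)+j}},
\]
for $j=0,1$, $r=\overline{0,l-1}$, $n\in\mathbb{N}_0$, where $k=2l$. Thus each subsequence appearing in the statement equals a fixed initial value divided by a partial product of terms of $(u_n)$ (for the $x$'s) or $(v_n)$ (for the $y$'s) taken along an arithmetic progression of common difference $k$, and the whole theorem reduces to analysing these partial products.

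Next I would specialise \eqref{e6} and \eqref{e7} under $A=1$. The hypothesis $|a\alpha|<1$ puts us in the branch $a\alpha\neq1$; writing $A=(a\beta+b)/(1-a\alpha)$ one obtains, when $A=1$,
\[
u_{2m}=1+(a\alpha)^{m}\,\frac{x_{-k}-x_{0}}{x_{0}},\qquad
u_{2m+1}=1+(a\alpha)^{m}\,\frac{ay_{-k}-(1-b)y_{0}}{y_{0}}.
\]
Hence the factors of $\prod_{i=0}^{n}u_{2(li+r)}$ are $1+c_{0}(a\alpha)^{li+r}$ with $c_{0}=(x_{-k}-x_{0})/x_{0}$, and the factors of $\prod_{i=0}^{n}u_{2(li+r)+1}$ are $1+c_{1}(a\alpha)^{li+r}$ with $c_{1}=(ay_{-k}-(1-b)y_{0})/y_{0}$. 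If $c_{0}=0$, i.e. $x_{-k}=x_{0}$, every factor is $1$, so the partial product is identically $1$ and $x_{kn+2r}=x_{2r-k}$ for all $n$; that is (b), and the condition $c_{1}=0$, i.e. $ay_{-k}=(1-b)y_{0}$, gives (d) the same way. If $c_{0}\neq0$ (resp.\ $c_{1}\neq0$), then $|a\alpha|<1$ gives $|a\alpha|^{l}<1$, so $\sum_{i\geq0}\bigl|c_{0}(a\alpha)^{li+r}\bigr|<\infty$, the infinite product $\prod_{i\geq0}\bigl(1+c_{0}(a\alpha)^{li+r}\bigr)$ converges, and therefore $\{x_{kn+2r}\}$ converges; this is (a), and (c) is the same with $c_{1}$.

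The statements about $y_n$ are obtained by the identical computation with \eqref{e8}, \eqref{e9} and $B=(\alpha b+\beta)/(1-a\alpha)$: when $B=1$ one gets $v_{2m}=1+(a\alpha)^{m}(y_{-k}-y_{0})/y_{0}$ and $v_{2m+1}=1+(a\alpha)^{m}(\alpha x_{-k}-(1-\beta)x_{0})/x_{0}$, so the coefficients vanish exactly when $y_{-k}=y_{0}$ and $\alpha x_{-k}=(1-\beta)x_{0}$ respectively, matching (b) and (d), and the convergence argument for $\prod v_{2(li+r)+j}$ is verbatim. Equivalently, the $y$-assertions follow from the $x$-assertions by the symmetry of \eqref{e24} under $x\leftrightarrow y$, $(a,b)\leftrightarrow(\alpha,\beta)$, which interchanges $A$ and $B$ and fixes $|a\alpha|$.

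The one step that is not mere bookkeeping is, in cases (a) and (c), checking that the limit of the infinite product is nonzero, so that the convergent subsequence has a finite limit rather than diverging to $\pm\infty$: each factor $1+c(a\alpha)^{li+r}$ is nonzero once $i$ is large (since $(a\alpha)^{li+r}\to0$), and the finitely many remaining factors are nonzero because the solution is well-defined (equivalently, by the standing sign hypotheses of this section, every $u_m$ and $v_m$ is positive). Granting that, the rest is just substitution into Theorem~\ref{th2} together with the elementary fact that $\prod(1+\epsilon_i)$ with $\sum|\epsilon_i|<\infty$ and no vanishing factor has a finite nonzero limit.
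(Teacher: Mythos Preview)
Your proposal is correct and follows essentially the same route as the paper: specialise Theorem~\ref{th2}(a) with $p=1$, rewrite $u_{2m}$ and $u_{2m+1}$ under $A=1$ as $1+c(a\alpha)^m$, and reduce (a)--(d) to the convergence (or triviality) of the resulting infinite products via comparison with a geometric series. The only cosmetic difference is that the paper passes through logarithms and the equivalence $\ln(1+t)\sim t$, whereas you invoke the standard criterion $\sum|\epsilon_i|<\infty\Rightarrow\prod(1+\epsilon_i)$ converges directly; your explicit check that the product limit is nonzero (hence the subsequence limit is finite) is a point the paper leaves implicit.
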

\begin{proof}
We only prove the results for the sub-sequences $\{x_{kn+2r+i}\}$ ($i=0,1$).
The same lines of proof, however, can be followed inductively to prove the results for the sub-sequences $\{y_{kn+2r+i}\}$ ($i=0,1$).

First, we note that in all cases
\[
u_{2n}=\frac{(a\alpha)^{n}(x_{-k}-x_{0})}{x_{0}}+1
\quad \text{and} \quad
u_{2n+1}=\frac{(a\alpha)^{n}[ay_{-k}+(b-1)y_{0}]}{y_{0}}+1.
\]
Since $ \lim_{n\rightarrow\infty}(a\alpha)^{n}=0$, then there exists $n_0\in \mathbb{N}$ such that $\forall n\geq n_0$ for all $c\in \mathbb{R}$, we have $|c(a\alpha)^{n}|<1$.

\begin{enumerate}
\item[(a)] By Theorem \eqref{th2}, we have
\begin{align*}
	x_{2(ln+r)}
		&=\frac{\displaystyle{x_{2r-2l}}}{\displaystyle{\prod_{i=0}^{n}\left(1+\frac{(a\alpha)^{li+r}(x_{-k}-x_{0})}{x_{0}}\right)}}\\
		&=\frac{\displaystyle{x_{2r-2l}}}{\displaystyle{c_{1}(n_{0})\exp\sum_{i=n_{0}}^{n}\ln\left[1+\frac{(a\alpha)^{li+r}(x_{-k}-x_{0})}{x_{0}}\right]}}.
\end{align*}
Using a propriety of logarithms, we have
	\[
		\ln\left[1+\frac{(a\alpha)^{n}(x_{-k}-x_{0})}{x_{0}}\right]\sim \frac{(a\alpha)^{n}(x_{-k}-x_{0})}{x_{0}}.
	\]
Now, since the term $\sum(a\alpha)^{n}$ is a geometric sum, with $|a\alpha|<1$, then the sum 
\[
\sum_{i\geq n_{0}}\frac{x_{-k}-x_{0}}{x_{0}}(a\alpha)^{li+r}
\] 
is convergent.
The desired result then follows.

\item[(b)] The result is immediate since $u_{2n}=1$ in this case.

\item[(c)] The proof is similar to item (a). 
	       That is, by Theorem \eqref{th2}, we have
		\begin{align*}
			x_{2(ln+r)}
				&=\frac{\displaystyle{x_{2r-2l+1}}}{\displaystyle{\prod_{i=0}^{n}\left[1+\frac{(a\alpha)^{n}(ay_{-k}+(b-1)y_{0})}{y_{0}}\right]}}\\
				&=\frac{\displaystyle{x_{2r-2l+1}}}{\displaystyle{c_{2}(n_{0})\exp\sum_{i=n_{0}}^{n}\ln\left[1+\frac{(a\alpha)^{li+r}(ay_{-k}+(b-1)y_{0})}{y_{0}}\right]}},
		\end{align*}
Again, using a propriety of logarithm, we have
	\[
	\ln\left[1+\frac{(a\alpha)^{n}(ay_{-k}+(b-1)y_{0})}{y_{0}}\right]\sim \frac{(a\alpha)^{n}(ay_{-k}+(b-1)y_{0})}{y_{0}}.
	\]
Since $\sum(a\alpha)^{n}$ is a geometric sum, then the sum 
\[
\sum_{i\geq n_{0}}\frac{ay_{-k}+(b-1)y_{0}}{y_{0}}(a\alpha)^{li+r}
\] 
is convergent. 
Hence, conclusion follows.

\item[(d)] As in item (b), the result is immediate since, in this case, $u_{2n + 1}=1$.
\end{enumerate}
\end{proof}

\section{Numerical Examples}
In this last and final section we provide several numerical examples to illustrate the results we have exhibited in the previous section.
In these examples, the initial values are chosen randomly from the unit interval $(0,1)$.
However, the initial points for sequence $\{x_n\}$ are plotted using squares ($\blacksquare$) while the initial points for sequence $\{y_n\}$ are plotted using triangles ($\blacktriangle$). 
The values for each of the parameters $k, a, b, \alpha$ and $\beta$ are indicated for each plot.
These illustrations corroborate Theorems \ref{thm3} to \ref{thm6}.   

\newpage
\begin{multicols}{2}
\noindent\scalebox{0.45}{\includegraphics{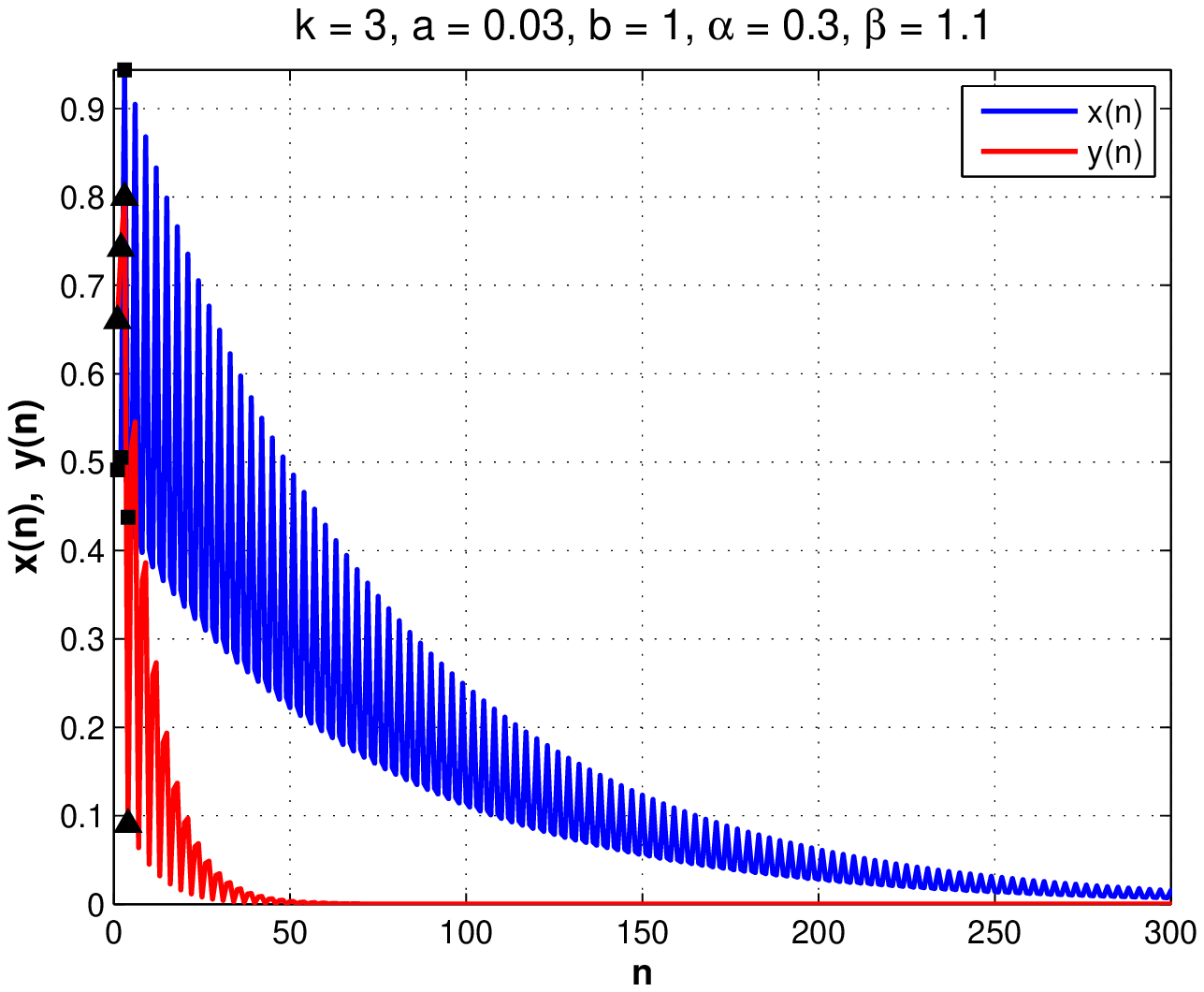}}
\scalebox{0.45}{\includegraphics{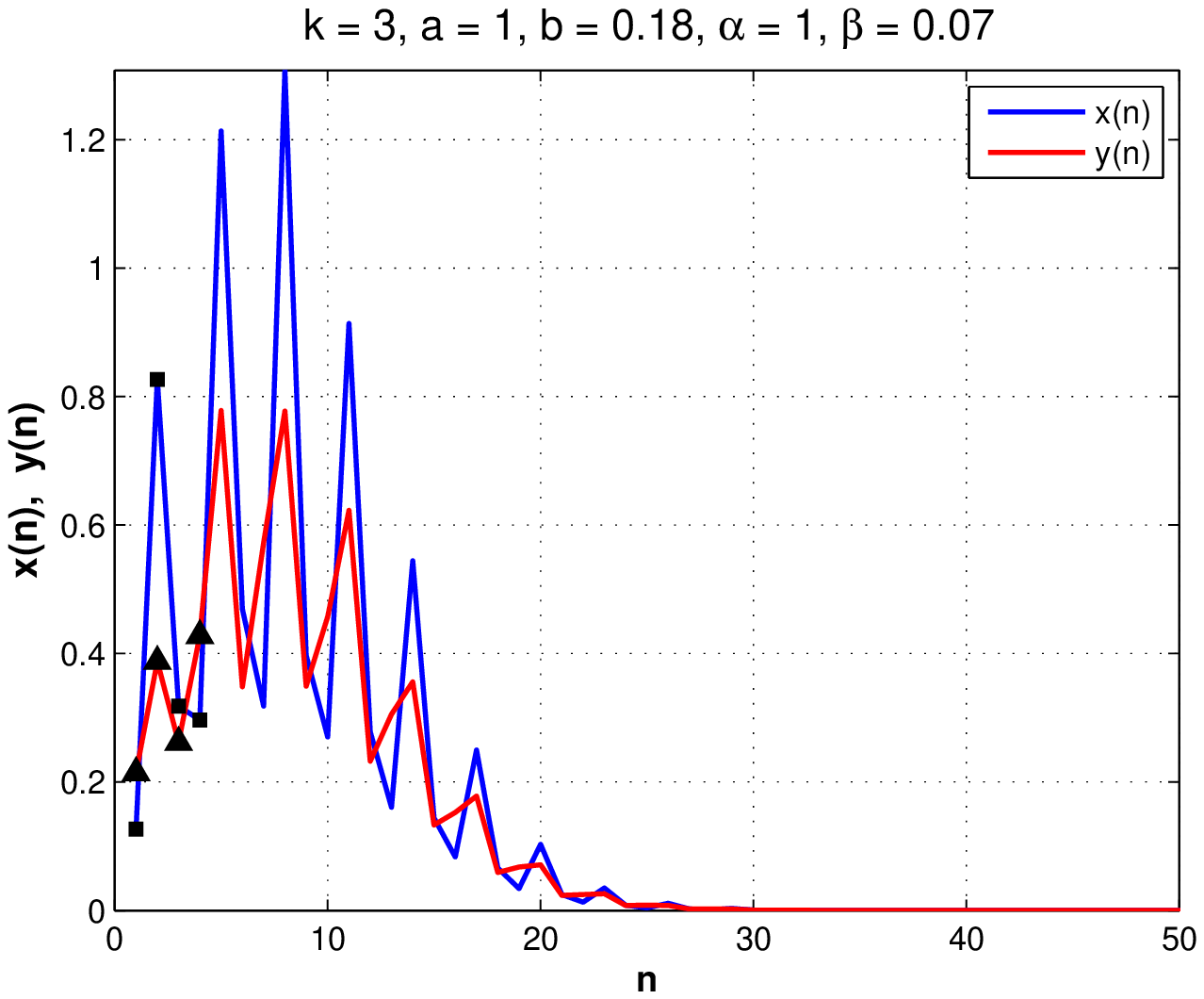}}
\scalebox{0.45}{\includegraphics{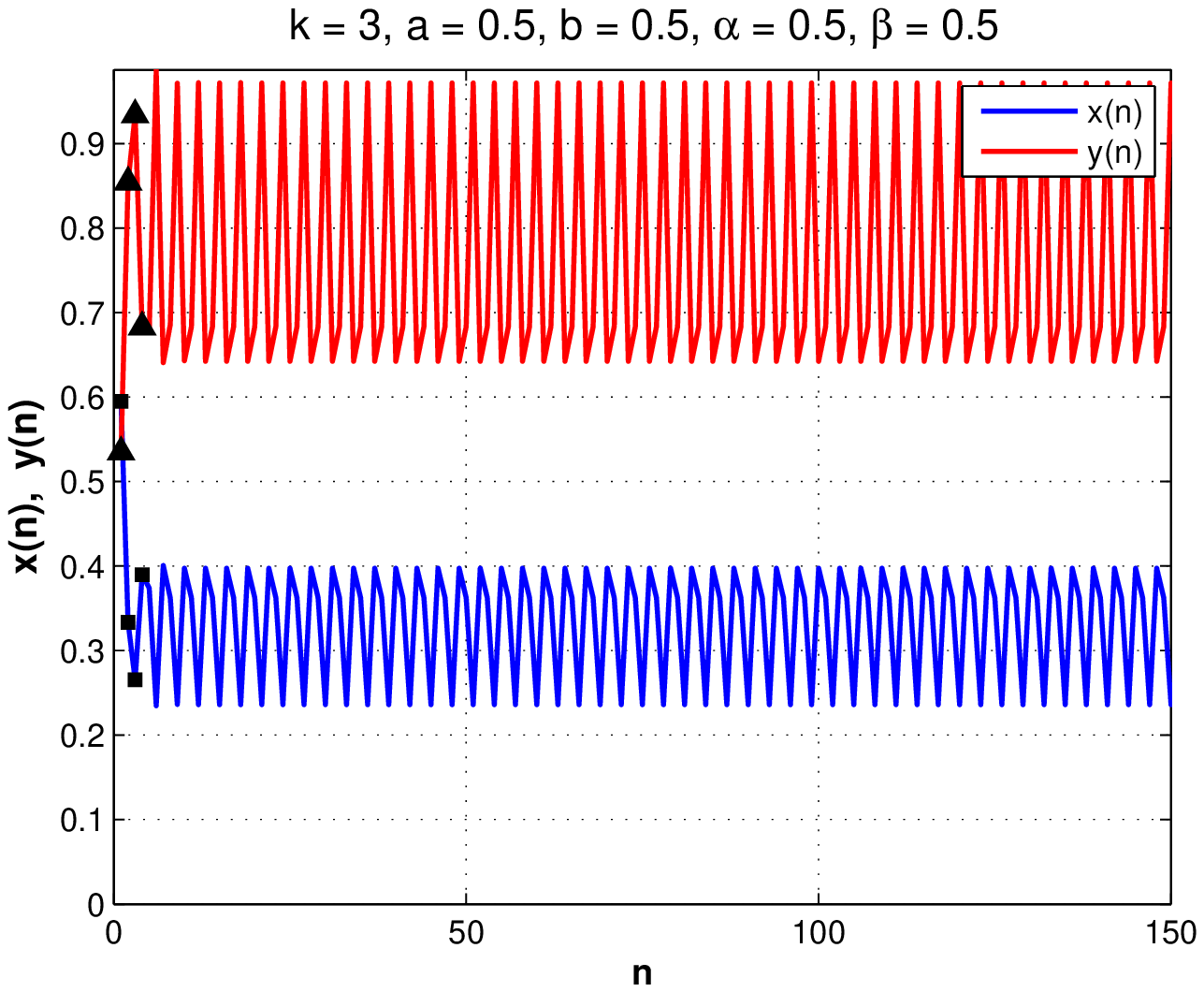}}
\scalebox{0.45}{\includegraphics{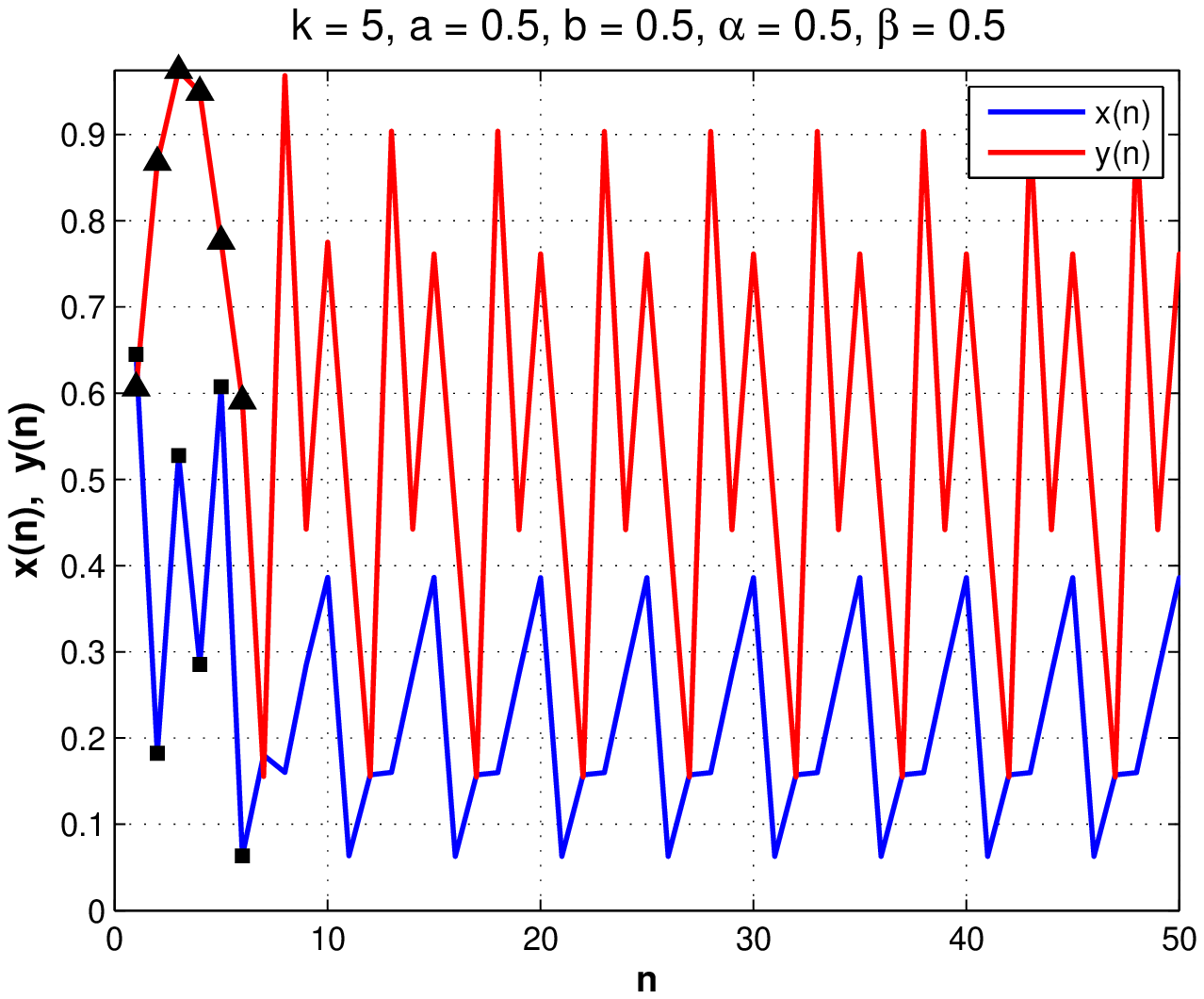}}
\scalebox{0.45}{\includegraphics{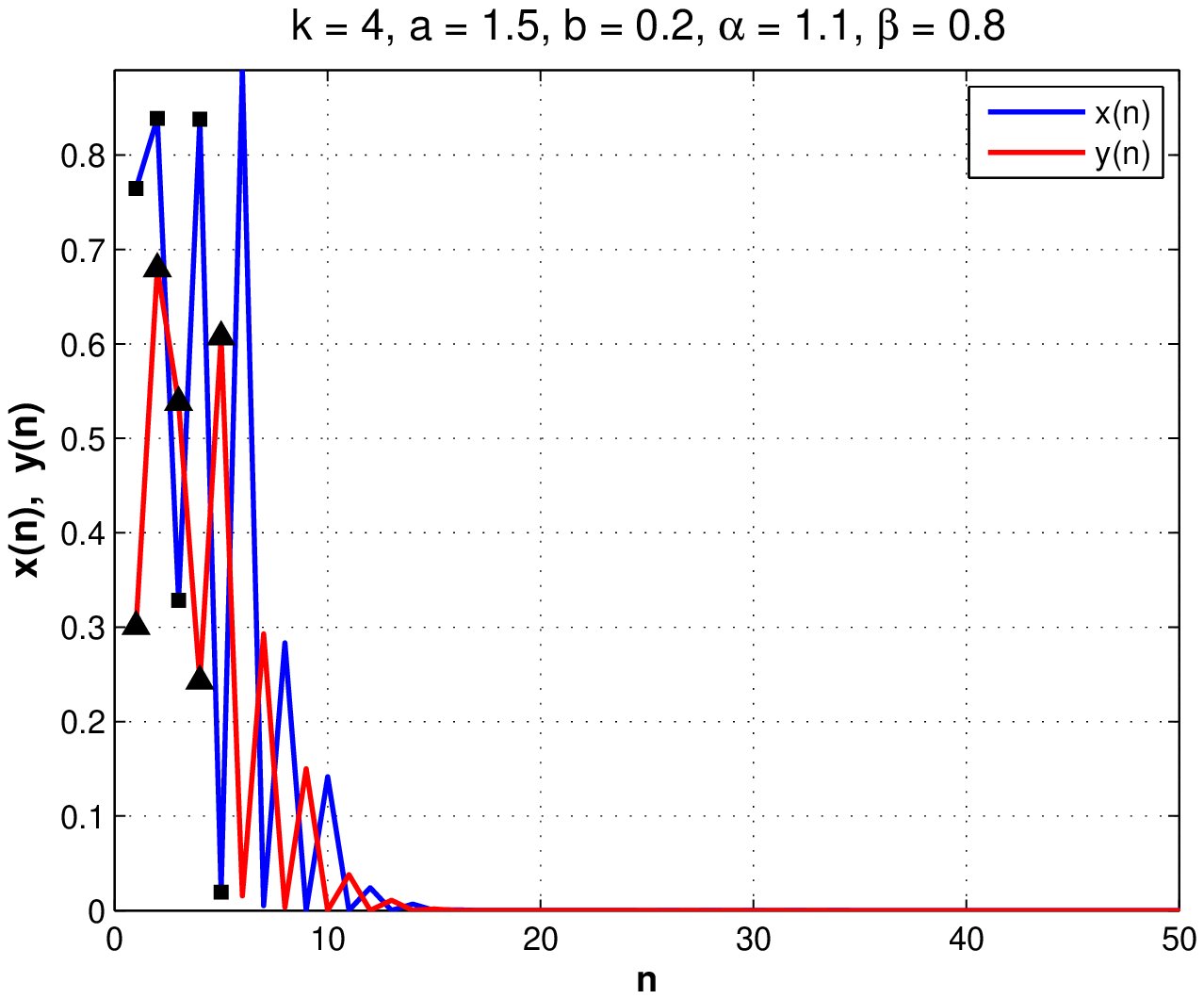}}
\scalebox{0.45}{\includegraphics{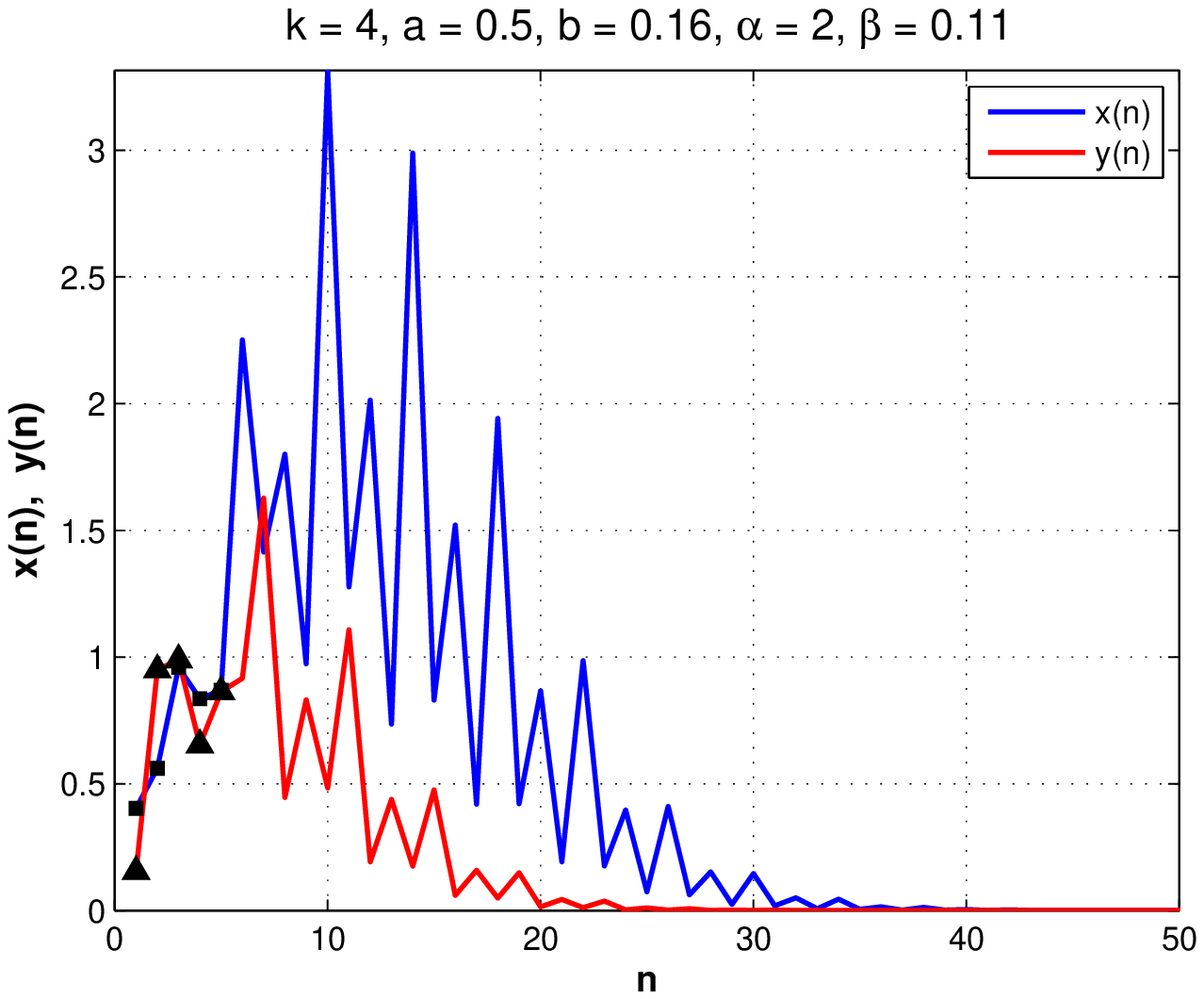}}
\scalebox{0.45}{\includegraphics{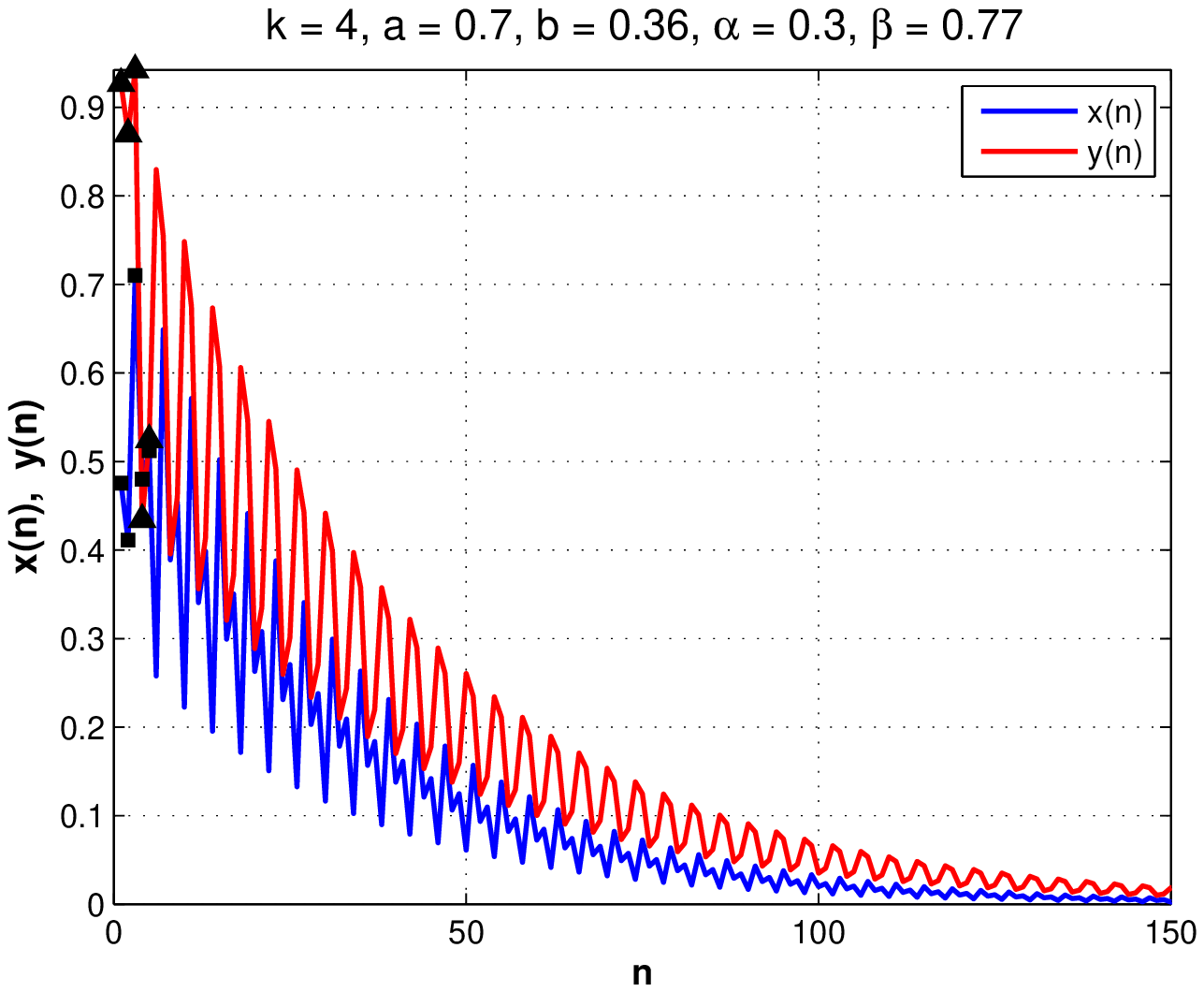}}
\scalebox{0.45}{\includegraphics{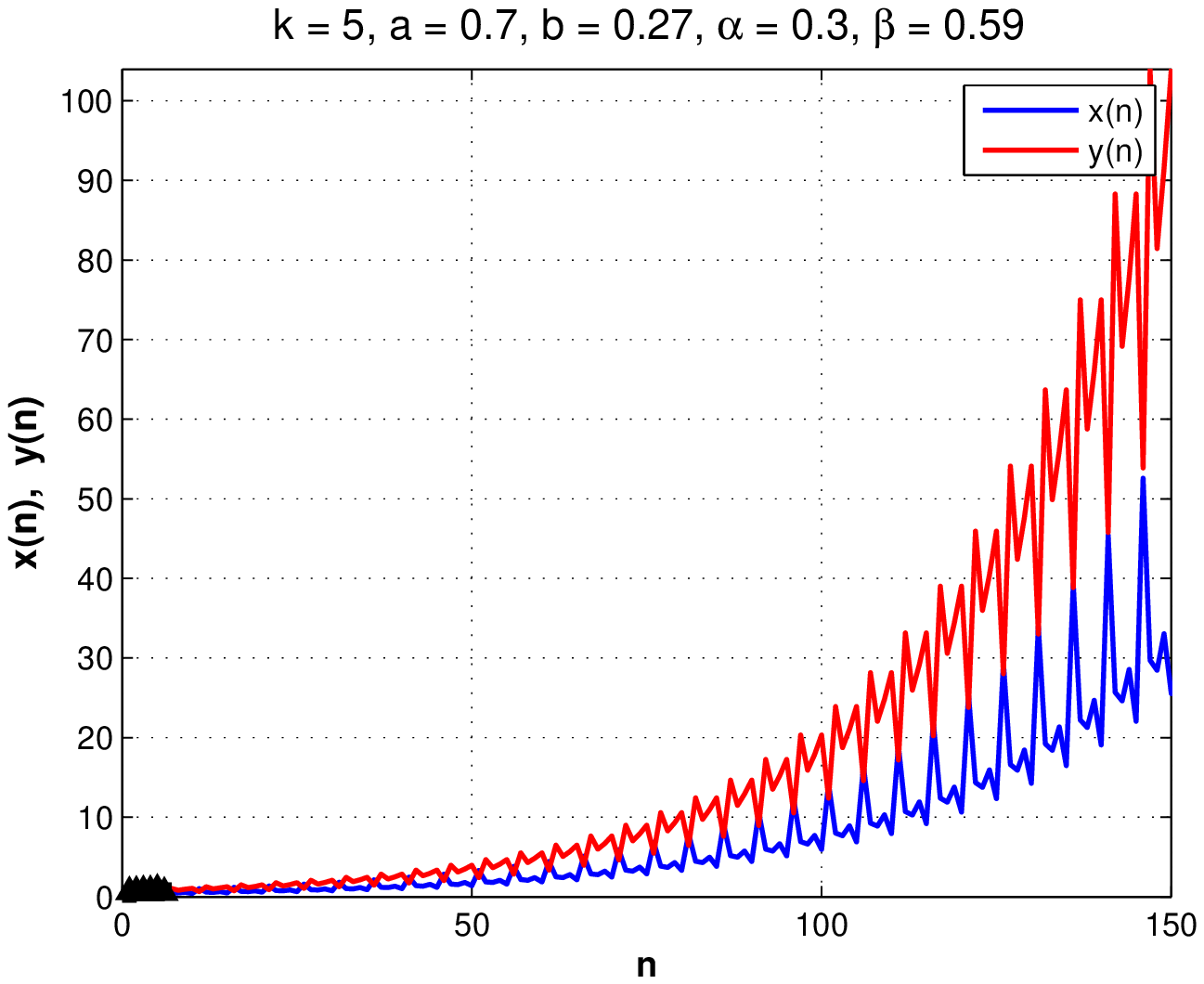}}
\end{multicols}

\end{document}